\documentclass[reqno,a4paper, 11pt]{amsart}


\usepackage[a4paper=true,pdfpagelabels]{hyperref}
\usepackage{graphicx}

\usepackage[ansinew]{inputenc}
\usepackage{amsfonts,epsfig}
\usepackage{latexsym}
\usepackage{amsmath}
\usepackage{amssymb}
\usepackage{mathabx}
\usepackage{comment}
\usepackage{color}

\newtheorem{theorem}{Theorem}

\newtheorem{corollary}[theorem]{Corollary}

\newtheorem*{question}{Question}

\theoremstyle{definition}

\newtheorem{example}[theorem]{Example}

\theoremstyle{remark}

\numberwithin{equation}{section}

\setlength\arraycolsep{2pt}

\newcommand{\D}{\mathbb{D}}

\newcommand{\N}{\mathbb{N}}

\newcommand{\C}{\mathbb{C}}

\newcommand{\e}{\varepsilon}

\newcommand{\I}{\mathcal{I}_c}

\newcommand{\Th}{\Theta}

\def\a{\alpha}       \def\b{\beta}        \def\g{\gamma}
\def\d{\delta}           \def\e{\varepsilon}
     \def\om{\omega}      
\def\s{\sigma}       \def\t{\theta}       
         \def\r{\rho}         
                  \def\vp{\varphi}

\DeclareMathOperator{\supp}{supp}
\DeclareMathOperator{\dist}{dist}
\DeclareMathOperator{\sing}{sing}

\addtolength{\hoffset}{-1.5cm}
\addtolength{\textwidth}{3cm}
\addtolength{\voffset}{-1cm}
\addtolength{\textheight}{2cm}

\begin{document}

\title{A Characterization of One-component Inner Functions}

\subjclass[2010]{Primary: 30J05; Secondary: 30J10, 30J15}
\keywords{one-component inner functions, interpolating Blaschke products, radial limits}

\thanks{The first author is supported in part by the Generalitat de Catalunya (grant 2017 SGR 395) and the Spanish Ministerio de Ciencia e Innovacion (project  MTM2017-85666-P). The second author is supported in part by the Saastamoinen Foundation and the Academy of Finland (project 286877).}


\author{Artur Nicolau}
\address{Universitat Aut\`onoma de Barcelona, Departament de Matem\`atiques,  08193 Barcelona, Catalonia}
\email{artur@mat.uab.cat}

\author{Atte Reijonen}
\address{University of Eastern Finland, P.O.Box 111, 80101 Joensuu, Finland}
\email{atte.reijonen@uef.fi}

\maketitle

\begin{abstract}
We present a characterization of one-component inner functions in terms of the location of their zeros and their associated singular measure. As consequence we answer several questions posed by J.~Cima and R.~Mortini. In particular we prove that for any inner function $\Theta$ whose singular set has measure zero, one can find a Blaschke product $B$ such that $\Theta B$ is one-component. We also obtain a characterization of one-component singular inner functions which is used to produce examples of discrete and continuous one-component singular inner functions. 
\end{abstract}

\section{Introduction and main results}\label{Sec1}

Let $\D$ be the open unit disc of the complex plane and let $\partial \D$ be the unit circle. An inner function is a bounded analytic function in $\D$
having unimodular radial limits almost everywhere on $\partial \D$. It is a classical result that any inner function can be factorized as the product of a Blaschke product, a singular inner function and unimodular constant (\cite{Garnett1981}).
Recall that, for a given sequence $\{z_n\}\subset\D$ satisfying $\sum_n (1-|z_n|)<\infty$, the Blaschke product
with zeros $\{z_n\}$ is defined by
    \begin{equation*}\label{Eq:Blaschke}
    B(z)=\prod_{n}\frac{|z_n|}{z_n}\frac{z_n-z}{1-\overline{z}_nz}, \quad z\in \D.
    \end{equation*}
Here each zero is repeated according to its multiplicity and the convention $|z_n|/z_n=1$ is used when $z_n=0$.
A singular inner function is an inner function of the form
    $$
    S(z)=\exp\left(\int_{\partial \D} \frac{z+\xi}{z-\xi}\, d\s(\xi) \right),\quad z\in\D,
    $$
where $\s$ is a positive measure on $\partial \D$, singular with respect to the Lebesgue measure. The singular set of an inner function $\Th$, which will be denoted by $\sing \Th$, consists of all points on $\partial \D$ in which $\Th$ does not have an
analytic continuation. If $\Th$ factors as $\Th = \lambda BS$, where $|\lambda|=1$, $B$ is a Blaschke product and $S$ is the singular inner function associated to the singular measure $\sigma$, then $\sing \Th$ is precisely the union of the accumulation points of zeros of $\Th$ and the closed support of the measure $\sigma$. See Chapter II of \cite{Garnett1981}.  

We focus on so-called one-component inner functions introduced by B. Cohn in  \cite{Cohn1982}, which are inner functions $\Th$ whose level set
    $\{z\in \D:|\Th(z)|<\e\}$
is connected for some $0< \e < 1$. For simplicity, we denote by $\I$ the set of all one-component inner functions. 
The main motivation to study $\I$ comes from the theory of model spaces $K_\Th^p=H^p \cap \overline{z}\Th \overline{H^p}$, $1<p<\infty$,  generated by the inner function $\Th$. 
For instance, B.~Cohn characterized Carleson measures for $K_\Th^2$ when $\Th \in \I$ in terms of their action on reproducing kernels (\cite{Cohn1982}),
and then S.~Treil and A.~Volberg generalized Cohn's result to all $p\in (1,\infty)$ (\cite{VT1988}).  It is also worth mentioning that N. Nazarov and A. Volberg proved that Cohn's result does not hold for arbitrary inner functions. See \cite{NV2002}. The class $\I$ also appears naturally in several recent results in the context of operator theory in $K_\Th^2$ \cite{ALMP2016, BBK2011, Besonov2016, Besonov2015}.

A.~Aleksandrov obtained a series of nice descriptions of inner functions in $\I$ in terms of the behaviour of their derivatives (\cite{Aleksandrov2002}). As a byproduct he proved  also a strong form of the Schwarz-Pick lemma for inner functions in $\I$. Using Aleksandrov's descriptions, J.~Cima, R.~Mortini and the second author constructed some concrete examples of one-component inner functions \cite{CM2017,CM2019,R2019}. In particular singular inner functions associated to a finite sum of weighted Dirac masses are one-component and thin Blaschke products are not in $\I$. A Blaschke product $B$ whose zeros $\{z_n\}_{n=1}^\infty$ (ordered by non-decreasing moduli) lie in a Stolz angle is one-component if
    \begin{equation*}
    \begin{split}
    \liminf_{n \rightarrow \infty}\frac{\sum_{|z_j|>|z_n|} (1-|z_j|)}{1-|z_n|}>0.
    \end{split}
    \end{equation*}
The main motivation of our work is to give descriptions and examples of one-component inner functions in terms of the location of their zeros and their associated singular measures. As it will be explained, our results provide answers to several questions posed by  J.~Cima and R.~Mortini.   

Let $\Th$ be an inner function which factors as $\Th = \lambda BS$, where $|\lambda|=1$, $B$ is a Blaschke product with zeros $\{z_n \}$ and $S$ is the singular inner function associated to the singular measure $\sigma$. Consider the measure $\mu(\Th)$ on $\overline{\D}$ defined as
	$$ \mu(\Th) = \sum_n  (1-|z_n|) \d_{z_n} + \s.$$
Here $\d_{z}$ denotes the Dirac point measure at the point $z$. We will describe one-component inner functions $\Th$ in terms of the mass given by $ \mu(\Th)$ to Carleson squares defined as 
	$$Q(z)=\{w\in \overline{\D}: |\arg z - \arg w|\le (1-|z|)/2, |w|\ge |z|\},  \quad  z \in \D . $$ 
This idea originates in \cite{Bishop1990} where C.~Bishop described inner functions $\Th$ in the little Bloch space in terms of the behaviour of the corresponding measure $ \mu(\Th)$. Similar ideas have been used in \cite{Nicolau1994} and \cite{Mortini-Nicolau2004}. 

\begin{theorem}\label{Thm1}
Let $\Th$ be an inner function. Then $\Th\in \I$ if and only if there exists a constant  $C=C(\Th)$ with $0<C<1$ such that $
\mu(\Th) (Q(z))=0$ when $|\Th(z)|\ge C$.
\end{theorem}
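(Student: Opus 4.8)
The plan is to translate everything into the language of the positive superharmonic function
$u(z)=-\log|\Th(z)|=\sum_n G(z,z_n)+\int_{\partial\D}\frac{1-|z|^2}{|z-\xi|^2}\,d\s(\xi)$,
where $G(z,w)=\log|(1-\bar w z)/(z-w)|$ is the Green function of $\D$. In these terms $\Omega(\e):=\{z:|\Th(z)|<\e\}$ is the super-level set $\{u>\log(1/\e)\}$, and the stated condition reads: there is $\delta_0>0$ such that $\mu(\Th)(Q(z))>0$ forces $u(z)\ge\delta_0$. The first step is to record two standard estimates: (i) if $\mu(\Th)$ gives mass at least a fixed multiple of $1-|z|$ to the top Whitney part of $Q(z)$, then $u(z)$ is bounded below by an absolute constant; and (ii) conversely $u(z)$ is dominated by a balayage of $\mu(\Th)$, so that $u(z)$ large forces a definite amount of $\mu(\Th)$-mass near $z$, inside a bounded dilate of $Q(z)$. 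I also record the elementary but crucial fact that, for any point $\z$ carrying mass (a zero $z_n$, a point of $\supp\s$, or even a boundary accumulation point of the zeros), every $z$ with $\z\in Q(z)$ satisfies $\mu(\Th)(Q(z))>0$.

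For the sufficiency direction I would argue as follows. Assume the condition holds with constant $C$ and set $\e=C$. By the last fact, for every mass point $\z$ the whole ``tent'' $\Gamma(\z):=\{z\in\D:\z\in Q(z)\}$ lies in $\Omega(\e)$. Each $\Gamma(\z)$ is star-shaped about the origin and contains $0$, so the skeleton $\mathcal T:=\bigcup_\z\Gamma(\z)$ is a connected subset of $\Omega(\e)$ containing the origin and all the zeros. Let $V_0$ be the component of $\Omega(\e)$ containing $\mathcal T$. Any bounded component $V$ of $\Omega(\e)$ must contain a zero: otherwise $u$ is harmonic on $V$ and equal to $\log(1/\e)$ on $\partial V$, forcing $u\equiv\log(1/\e)$ by the maximum principle, a contradiction. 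Since every zero already lies in $V_0$, there is no bounded component other than $V_0$. It remains to absorb the components touching $\partial\D$; these meet $\partial\D$ only inside $\sing\Th$, and I would connect each of their points to $\mathcal T$ using estimate (ii): a point $p\in\Omega(\e)$ whose box $Q(p)$ is mass-free still has $u(p)$ large, hence mass in a bounded dilate of $Q(p)$, so a short push of $p$ should enter some $\Gamma(\z)\subset\mathcal T$ along a path on which $u$ stays above the threshold. This ``fringe'' step is where the real work lies.

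For the necessity direction I would argue by contraposition. If the condition fails there are points $z_k$ with $u(z_k)\to0$ and $\mu(\Th)(Q(z_k))>0$. From $u(z_k)\to0$ one notes that no zero lies in the pseudohyperbolic disc $D(z_k,1/2)$ (a zero there would give $u(z_k)\ge\log2$), so $u$ is positive and harmonic on $D(z_k,1/2)$; Harnack's inequality then forces $u$ to stay small, hence $|\Th|$ to stay close to $1$, on a fixed pseudohyperbolic disc about $z_k$. On the other hand the mass in $Q(z_k)$ sits pseudohyperbolically far below $z_k$ and produces, near it, a pseudohyperbolic disc contained in $\Omega(\e)$ for every $\e$. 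The goal is to turn the Harnack ``cap'' over $z_k$ into a genuine barrier enclosing that deep mass, thereby exhibiting a bounded component of $\Omega(\e)$ disjoint from the bulk of the level set and contradicting connectivity for every $\e$.

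The main obstacle, in both directions, is precisely this passage between the size of $|\Th|$ and the topology of its level sets. A single Harnack disc covers only the central portion of the Carleson window $Q(z_k)$, so to close the barrier I expect to have to choose the failing point $z_k$ at the correct scale --- essentially at the isolation scale of the offending mass --- so that the whole window, together with its angular flanks, is mass-free at comparable height and the cap extends across it; the dual difficulty in the sufficiency direction is the fringe step. I would isolate both into a single geometric lemma comparing $\Omega(\e)$ with the union of tents $\mathcal T$ up to bounded dilation, and I expect that lemma, rather than the superharmonic bookkeeping, to be the crux of the proof.
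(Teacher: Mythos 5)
Your frame (the superharmonic function $u=-\log|\Th|$, tents over mass points, Carleson geometry) is reasonable, and two of your steps are correct and even cleaner than the paper's: the observation that the hypothesis forces every tent $\Gamma(\z)=\{z:\z\in Q(z)\}$ over a mass point into the sub-level set, and the maximum-principle argument showing that any component compactly contained in $\D$ must contain a zero. But both halves of your proof stop exactly where the difficulty begins, and the tool you propose to finish them is false. Concretely, your claim (ii) --- ``$u(z)$ large forces a definite amount of $\mu(\Th)$-mass inside a bounded dilate of $Q(z)$'' --- fails: at height $1-|z|$ put $\lceil \t_k^2/(1-|z|)^2\rceil$ zeros at angular distance $\t_k=2^kK(1-|z|)$ from $z$, for $k=1,\dots,M$; every zero lies outside $KQ(z)$, yet each angle contributes $\gtrsim 1$ to $u(z)$, so $u(z)\gtrsim M$ is as large as you like (and near a zero $u$ is not dominated by any balayage at all, because of the logarithmic singularity of the Green function). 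Consequently your ``fringe step'' --- absorbing the components of $\Omega(\e)$ whose closures meet $\partial\D$ --- has no proof, and it is precisely the crux: a rogue component can approach a point $\xi\in\sing\Th$ through the two flanks of the tent $\Gamma(\xi)$ without ever meeting your skeleton $\mathcal{T}$. The paper closes exactly this gap with a stopping-time argument: take the maximal dyadic Carleson squares $Q_j$ whose top halves meet $\{|\Th|\ge C_1\}$; the hypothesis plus the Schwarz--Pick lemma gives $\mu(\Th)(2Q_j)=0$; the two-sided balayage estimate (legitimate there, precisely because $Q_j$ is pseudohyperbolically far from all the mass) gives $|\Th|\ge C_2^{C_3}$ on all of $Q_j$; and then any extra component of $\{|\Th|<C_1\}$, being contained in $\bigcup_j Q_j$, carries an inner function $C_1^{-1}\Th\circ\vp$ (Cohn/Tsuji) whose modulus is bounded below, hence is constant --- a contradiction that handles boundary-touching components uniformly, which your maximum principle cannot.

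The necessity half has the same structural problem. Your plan is to build a barrier: a curve surrounding the mass in $Q(z_k)$ on which $|\Th|$ stays large, thereby manufacturing a second component. Harnack only gives you a fixed pseudohyperbolic disc around $z_k$, and your hope of choosing $z_k$ ``at the isolation scale of the offending mass'' is not available in general: the mass in $Q(z_k)$ need not be isolated at any scale (think of $\s$ supported on a Cantor set, or of zeros accumulating along a whole subarc), so the flanks of $Q(z_k)$ may carry mass at every scale and no cap will close into a barrier. The paper's necessity proof needs no barrier at all; it runs the connectivity hypothesis forward instead of contradicting it by surgery. If $\Th\in\I$ with connected sub-level set $\Omega$, then the mass in $Q(z_k)$ forces $\Omega$ (which contains every zero and clusters at every point of $\supp\s$) to enter $Q(z_k)$; since $\Omega$ is connected of fixed diameter, its radial projection $(\partial\Omega)^*$ covers a fixed fraction of the arc of $2Q(z_k)$, so $\om(z_k,(\partial\Omega)^*,\D)\gtrsim 1$; the subharmonicity bound $\log|\Th(z_k)|\le(\log C)\,\om(z_k,\partial\Omega,\D\setminus\Omega)$ combined with Hall's lemma then bounds $|\Th(z_k)|$ away from $1$, contradicting $u(z_k)\to 0$. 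Hall's lemma plus harmonic measure is the missing idea that replaces your separating curve entirely. (A minor further point: with the paper's definition of $Q(z)$, your tents near the origin are disjoint sectors of angular width about $1$ and $\arg 0$ is undefined, so $\mathcal{T}$ is not connected as stated; you need $0\in\Omega(\e)$, which does follow, e.g.\ from $|\Th(0)|=\lim_{t\to 0^+}|\Th(t\z)|\le C$ along a radius inside a tent, but it has to be argued.)
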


This result can be proved applying \cite[Theorem~1.2]{Aleksandrov2002} by A.~Aleksandrov but in Section~\ref{Sec3}, we present a  self-contained proof relying on Hall's lemma and a stopping time argument. Let us say that an inner function is a finite-component inner function if it has a finitely connected level set.
Applying the proof of Theorem~\ref{Thm1}, we show in Section~\ref{Sec3} that all finite-component inner functions belong to $\I$.

\begin{corollary}\label{Coro1}
Let $\Th$ be an inner function. If there exists a constant $C$ with $0<C<1$ such that $\{z\in \D: |\Th(z)|<C\}$ is finitely connected, then $\Th \in \I$.
\end{corollary}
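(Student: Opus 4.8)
The plan is to deduce the Corollary from Theorem~\ref{Thm1} by \emph{enlarging the level}: starting from the finitely many components of $\{z\in\D:|\Th(z)|<C\}$, I will exhibit a larger threshold $\e\in(C,1)$ for which $\{z\in\D:|\Th(z)|<\e\}$ consists of a single component, so that $\Th\in\I$. Denote the components of $\{|\Th|<C\}$ by $\Omega_1,\dots,\Omega_k$ and fix a point $p_j\in\Omega_j$ for each $j$. Since each segment $[0,p_j]$ is a compact subset of $\D$, we have $\max_{[0,p_j]}|\Th|<1$; setting $M=\max_{1\le j\le k}\max_{[0,p_j]}|\Th|<1$, I choose any $\e$ with $\max(M,C)<\e<1$. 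The finiteness of $k$ enters precisely here: it is what makes $M<1$ and hence allows a single threshold $\e$ to absorb all $k$ components at once.

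For this $\e$ the origin is joined to every $\Omega_j$ inside the level set, since $[0,p_j]\subset\{|\Th|<\e\}$; thus all the $\Omega_j$ together with $0$ lie in one component $U_0$ of $\{|\Th|<\e\}$, and it suffices to rule out any further component $V$ disjoint from $U_0$. Such a $V$ meets no $\Omega_j$, so $C\le|\Th|<\e$ on $V$ and $V$ contains no zero of $\Th$. If $\overline V\subset\D$, then $\log|\Th|$ is harmonic in $V$, equals $\log\e$ on $\partial V$, yet takes strictly smaller values inside; it therefore attains an interior minimum and must be constant, a contradiction. Hence $\overline V$ meets $\partial\D$. At any regular boundary point $\Th$ extends analytically with unimodular boundary values, so $|\Th|>\e$ nearby and $V$ cannot cluster there; consequently $\overline V\cap\partial\D\subset\sing\Th$.

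Now pick $\zeta\in\overline V\cap\sing\Th$. Because $\zeta\in\sing\Th$, the measure $\mu(\Th)$ charges every Carleson square based at $\zeta$, and this is exactly the configuration analysed in the proof of Theorem~\ref{Thm1}: the Hall's lemma estimate used there converts the mass of $\mu(\Th)$ near $\zeta$ into a lower bound for $\log(1/|\Th|)$, producing points arbitrarily close to $\zeta$ at which $|\Th|<C$. Since $V$ is the component of $\{|\Th|<\e\}$ clustering at $\zeta$, and the approach region furnished by Hall's lemma is connected and tends to $\zeta$, these low points lie in $V$; but then $V$ meets $\{|\Th|<C\}$, contradicting $V\cap\bigcup_j\Omega_j=\emptyset$. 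Hence no such $V$ exists, $\{|\Th|<\e\}=U_0$ is connected, and $\Th\in\I$.

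The routine ingredients here (compactness of the segments, the minimum principle for $\log|\Th|$, and the behaviour of $|\Th|$ at regular boundary points) are immediate; the one delicate point, and the place where I must lean on the \emph{proof} of Theorem~\ref{Thm1} rather than on its statement, is the last step: showing that a boundary component clustering at a singular point is forced to contain points with $|\Th|<C$. I expect this to be the main obstacle, to be handled by the stopping-time selection of Carleson squares at $\zeta$ together with Hall's lemma, exactly as in the proof of Theorem~\ref{Thm1}.
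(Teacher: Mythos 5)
Your first three steps are sound: joining every component of $\{|\Th|<C\}$ to the origin by radial segments and raising the level to $\e>\max(M,C)$ does put all of them, together with $0$, in one component $U_0$ of $\{|\Th|<\e\}$; the minimum principle does exclude an extra component $V$ with $\overline{V}\subset\D$; and the closure of an extra component can only meet $\partial\D$ inside $\sing\Th$. The gap is in the last step. From the (true) fact that there are points arbitrarily close to $\zeta\in\overline{V}\cap\sing\Th$ at which $|\Th|<C$, you conclude that such points lie in $V$. They cannot: every point with $|\Th|<C$ lies in $\bigcup_j\Omega_j\subset U_0$, and $U_0\cap V=\emptyset$ by the definition of $V$. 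What your argument really shows is that $U_0$ also clusters at $\zeta$, and there is no topological obstruction to two disjoint open connected sets clustering at the same boundary point (the two half-disks $\D\cap\{\Imag z>0\}$ and $\D\cap\{\Imag z<0\}$ both cluster at $z=1$), so $V$ is not ``the'' component clustering at $\zeta$. Moreover, the phrase ``the approach region furnished by Hall's lemma'' has no content: Hall's lemma is only an inequality bounding harmonic measure from below by the harmonic measure of a radial projection; it furnishes no connected region, and in the paper it is used in the opposite direction (to show $|\Th|$ is bounded away from $1$ at points whose Carleson square meets the level set), not to produce points where $|\Th|<C$. So the contradiction is not established, and the proof collapses at exactly the point you flagged as delicate.

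The step can be closed, but with a different tool, namely the one the paper uses in the sufficiency part of Theorem~\ref{Thm1}: $V$ is simply connected by the maximum principle, so take a conformal map $\vp:\D\to V$; by the Cohn--Tsuji fact quoted in that proof, $g=\e^{-1}\,\Th\circ\vp$ is inner; since $|\Th|\ge C$ on $V$, one has $|g|\ge C/\e>0$ everywhere, and an inner function bounded away from zero is a unimodular constant, forcing $|\Th|\equiv\e$ on the open set $V$ --- a contradiction. Alternatively, note that the paper's own proof of this corollary is shorter and runs Theorem~\ref{Thm1} in the other direction: if $\Th\notin\I$, Theorem~\ref{Thm1} yields points $z_j$ with $|\Th(z_j)|\to 1$ and $\mu(\Th)(Q(z_j))>0$; each $Q(z_j)$ then meets the level set $\{|\Th|<C\}$, so by pigeonhole one fixed component $\Omega_k$ meets $Q(z_j)$ for infinitely many $j$, and Hall's lemma applied to $\Omega_k$ (exactly as in the necessity part of Theorem~\ref{Thm1}) gives $|\Th(z_j)|\le D<1$, contradicting $|\Th(z_j)|\to 1$. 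In that argument the finiteness of the number of components enters through the pigeonhole, not through your maximum $M$ over radial segments.
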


For $1<\a<\infty$, let 
    $\Gamma_\a(e^{i\t})=\left\{z\in\D:|z-e^{i\t}|<\a(1-|z|)\right\}$
denote the Stolz angle with vertex at $e^{i\t}\in \partial \D$.
As another consequence of Theorem~\ref{Thm1}, in Section~\ref{Sec3}, we characterize one-component Blaschke products whose zeros are contained in a Stolz angle.

\begin{corollary}\label{Coro1b}
Let $1<\a<\infty$ and let $B$ be a Blaschke product with infinitely many zeros. Assume that the zeros of $B$ are contained in a Stolz angle with vertex at $e^{i\t}\in \partial \D$. Then $B\in \I$ if and only if $\limsup_{r \rightarrow 1^-}|B(re^{i\t})|<1$.
\end{corollary}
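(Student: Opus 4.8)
The plan is to prove Corollary~\ref{Coro1b} by applying Theorem~\ref{Thm1} to the Blaschke product $B$, whose associated measure is simply $\mu(B) = \sum_n (1-|z_n|)\delta_{z_n}$ since $B$ has no singular part. The geometric constraint that all zeros lie in a Stolz angle $\Gamma_\a(e^{i\t})$ is what makes the two conditions collapse onto a single radial quantity, so the heart of the matter is to translate the condition ``$\mu(B)(Q(z)) = 0$ whenever $|B(z)| \ge C$'' into the radial-limit condition $\limsup_{r\to 1^-}|B(re^{i\t})| < 1$.

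\medskip

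For the forward direction, suppose $B \in \I$, so by Theorem~\ref{Thm1} there is $C \in (0,1)$ with $\mu(B)(Q(z)) = 0$ whenever $|B(z)| \ge C$. I would argue by contradiction: assume $\limsup_{r\to 1^-}|B(re^{i\t})| = 1$. Then there is a sequence $r_k \to 1^-$ with $|B(r_k e^{i\t})| \ge C$. Because the zeros all sit inside the fixed Stolz angle, any zero $z_n$ with modulus close to $1$ has angular distance from $e^{i\t}$ comparable to $(1-|z_n|)$, which forces $z_n$ to lie in the Carleson square $Q(r_k e^{i\t})$ for a suitable choice of $k$ (shrinking $1-r_k$ just below $1-|z_n|$). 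Since $B$ has infinitely many zeros accumulating at $e^{i\t}$, infinitely many of them are captured this way, giving $\mu(B)(Q(r_k e^{i\t})) > 0$ at points where $|B(r_k e^{i\t})| \ge C$, a contradiction.

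\medskip

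For the reverse direction, suppose $\limsup_{r\to 1^-}|B(re^{i\t})| < 1$; then $|B(re^{i\t})| \le c < 1$ for all $r$ close to $1$. Here I would verify the criterion of Theorem~\ref{Thm1} directly. Given any $z$ with $|B(z)| \ge C$ (for $C$ to be chosen close to $1$), the condition at the vertex forces $z$ to stay away from $e^{i\t}$ in a quantitative Stolz-type sense: combining the hypothesis on the radial behaviour with the Schwarz--Pick estimate and the fact that all zeros lie in $\Gamma_\a(e^{i\t})$, one shows that the Carleson square $Q(z)$ can contain no zeros, since any zero in $Q(z)$ would pull $|B|$ down below $C$ near $z$ and, by the Stolz-angle confinement, propagate that smallness to the radius at $e^{i\t}$. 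Choosing $C$ sufficiently close to $1$ makes $\mu(B)(Q(z)) = 0$, and Theorem~\ref{Thm1} then yields $B \in \I$.

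\medskip

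The main obstacle I anticipate is the careful quantitative geometry relating the position of a zero $z_n$ inside the Stolz angle, the Carleson square $Q(z)$ that detects it, and the resulting value $|B(z)|$ at the relevant point. One must make precise, with constants depending only on $\a$, the equivalence ``a zero lies in $Q(z)$ for some $z$ with $|B(z)|$ large'' $\iff$ ``the radial values $|B(re^{i\t})|$ come close to $1$''; the delicate part is controlling how a single factor of $B$ suppresses $|B|$ and ensuring, via the Stolz confinement, that this suppression is felt along the radius to $e^{i\t}$ rather than being washed out by the other factors. Handling the angular comparability uniformly in $n$ and passing from a pointwise estimate to the statement about $\limsup$ is where the bookkeeping will be heaviest.
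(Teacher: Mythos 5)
Your overall strategy --- running both directions through Theorem~\ref{Thm1} and letting the Stolz-angle geometry do the work --- is exactly the paper's, and your necessity half is in substance the paper's argument. But the justification you give in the sufficiency half is a genuine gap. The mechanism you propose there, ``any zero in $Q(z)$ would pull $|B|$ down below $C$ near $z$,'' is false for individual zeros: if $z_n\in Q(z)$ sits near the top of the square, with $1-|z_n|=\e(1-|z|)$ and $\e$ small, then $1-\r(z,z_n)^2=(1-|z|^2)(1-|z_n|^2)/|1-\overline{z}_nz|^2\asymp \e$, so the corresponding Blaschke factor is arbitrarily close to $1$ at $z$ and suppresses $|B(z)|$ not at all. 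You yourself flag ``how a single factor of $B$ suppresses $|B|$'' as the delicate bookkeeping point, but no bookkeeping makes that mechanism work; the implication you want ($z_n\in Q(z)\Rightarrow|B(z)|\le C$) is true under the hypothesis, yet for the opposite reason, with the smallness flowing from the radius outward rather than from the zero. The paper's proof: choose a wider cone $\Gamma_\g$, $\b<\g<\pi/2$, around the cone $\Gamma_\b$ containing the zeros, so that $z\in\D\setminus\Gamma_\g$ implies $Q(z)\cap\Gamma_\b=\emptyset$ and hence $\mu(B)(Q(z))=0$; then the Schwarz--Pick lemma propagates the hypothesis $\limsup_{r\to1^-}|B(re^{i\t})|<1$ from the radius into $\Gamma_\g$ (points of $\Gamma_\g$ near the vertex lie within a fixed pseudohyperbolic distance $\delta(\g)<1$ of the radius), giving $|B|\le C(\g)<1$ on $\Gamma_\g$ near $e^{i\t}$, while the part of $\Gamma_\g$ away from the vertex is handled by compactness. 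Thus $|B(z)|>C$ forces $z\notin\Gamma_\g$, and Theorem~\ref{Thm1} applies.

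In the necessity half your quantifiers are also backwards, though here the fix is one line. The sequence $r_k\to1^-$ is handed to you by the assumption $\limsup_{r\to1^-}|B(re^{i\t})|=1$, so you cannot, for a fixed zero $z_n$, ``choose a suitable $k$'': all the $r_k$ may already lie closer to $1$ than the threshold that $z_n$ requires. Instead fix $k$ and choose the zero: since the zeros accumulate at $e^{i\t}$ inside $\Gamma_\a(e^{i\t})$, any zero with $1-|z_n|\le(1-r_k)/(2\a)$ lies in $Q(r_ke^{i\t})$, so $\mu(B)(Q(r_ke^{i\t}))>0$ for every $k$, which contradicts the criterion of Theorem~\ref{Thm1} at the points $r_ke^{i\t}$ where $|B|\ge C$. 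Note also that your parenthetical recipe ``shrinking $1-r_k$ just below $1-|z_n|$'' is the wrong way around: it gives $|z_n|<r_k$, which expels $z_n$ from $Q(r_ke^{i\t})$, since the square only contains points $w$ with $|w|\ge r_k$; the square must be deep enough relative to the zero, $1-r_k\ge2\a(1-|z_n|)$, not shallower.
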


A sequence of points $\{z_n\}\subset \D$ is called  uniformly separated if
    $$
    \inf_{n\in\N}\prod_{k\ne n}\left|\frac{z_k-z_n}{1-\overline{z}_kz_n}\right|>0.
    $$
A celebrated result by L. Carleson says that interpolating sequences for the algebra of bounded analytic functions in $\D$ are precisely the uniformly separated sequences. A Blaschke product with uniformly separated zeros is called an interpolating Blaschke product. Given a measurable set $E \subset \partial \D$ let $|E|$ denote its Lebesgue measure.  

One-component inner functions have many special properties. For instance the singular set of an inner function in $\I$ has Lebesgue measure zero. See \cite{Aleksandrov1989}. Theorem~\ref{Thm2} below gives an affirmative answer to the following question posed in \cite{CM2017} by J.~Cima and R.~Mortini:
\emph{Can every inner function $\Th$ with $|\sing \Th|=0$ be multiplied by a one-component inner function $B$
into $\I$?} In addition, we show that $B$ can be chosen to be an interpolating Blaschke product. 

\begin{theorem}\label{Thm2}
Let $\Th$ be an inner function whose singular set has Lebesgue measure zero. Then there exists an interpolating Blaschke product $B \in \I$ such that $B\Th \in \I$.
\end{theorem}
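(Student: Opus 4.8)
The plan is to deduce the theorem from Theorem~\ref{Thm1}. Writing $\Th=\la B_0 S$, where $B_0$ has zeros $\{z_n\}$ and $S$ is associated to $\s$, the product $B$ we construct contributes only zeros, so that $\mu(B\Th)=\mu(B)+\mu(\Th)$, where $\mu(\Th)=\sum_n(1-|z_n|)\d_{z_n}+\s$ is a \emph{finite} measure whose closed support meets $\partial\D$ exactly in $E:=\sing\Th$, a set with $|E|=0$. By Theorem~\ref{Thm1} it therefore suffices to produce an interpolating Blaschke product $B$, itself lying in $\I$, together with a constant $C<1$ such that
\[
\mu(B\Th)(Q(z))>0 \ \Longrightarrow\ |B(z)\Th(z)|<C .
\]
Since $|B\Th|\le|B|$, the atoms coming from $\mu(\Th)$ can be handled by forcing $|B|$ to be small on their shadows, while the atoms coming from $\mu(B)$ will be handled by the fact that $B$ itself is one-component.

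First I would record the harmless reduction that only the behaviour near $\partial\D$ matters: the modulus of any inner function is bounded away from $1$ on every compact subset of $\D$, so the implication above can only fail as $z\to\partial\D$ within the set $G=\{z:\mu(B\Th)(Q(z))>0\}$. Thus it suffices to arrange $\limsup_{z\to\partial\D,\,z\in G}|B\Th(z)|<1$, and the whole problem becomes local near $E$ and near the accumulation set of the zeros of $B$, which we are free to place.

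\emph{Construction of $B$.} Using $|E|=0$, I would build the zeros of $B$ by a Whitney, dyadic stopping-time scheme: at generation $k$ retain those dyadic arcs $I$ whose top halves $T(I)$ are needed to shadow the part of $\mu(\Th)$ living at that scale, and place a zero of $B$ at the centre $c_I$ of $T(I)$. The retained family is to be thinned so that, simultaneously, (i) the centres $\{c_I\}$ are uniformly separated, whence $B$ is an interpolating Blaschke product and $\mu(B)$ is a Carleson measure; (ii) the fixed-radius pseudohyperbolic discs about the retained centres cover a sawtooth neighbourhood of the selected arcs containing $\supp\mu(\Th)$ near $\partial\D$, so that $|B|\le C$ there; and (iii) the retained family satisfies the density condition of Theorem~\ref{Thm1} for $B$ alone, so that $B\in\I$. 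The verification then splits according to the source of the mass in $Q(z)$: if $Q(z)$ charges $\mu(\Th)$, then $z$ lies in the sawtooth of (ii) and $|B(z)\Th(z)|\le|B(z)|\le C$; if $Q(z)$ charges only $\mu(B)$, then $\mu(B)(Q(z))>0$ and (iii) together with Theorem~\ref{Thm1} applied to $B$ give $|B(z)|<C'$, so enlarging $C$ to $\max\{C,C'\}$ closes the implication and yields $B\Th\in\I$.

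The main obstacle is the simultaneous fulfilment of (i)--(iii): shadowing a neighbourhood of $E$ demands many zeros, whereas uniform separation and the Blaschke condition permit only few, and it is exactly the hypothesis $|E|=0$ that must reconcile them. A fixed-aperture sawtooth over $E$ will not suffice in general, since $\supp\s$ may have upper box dimension $1$ and then a full sawtooth carries a non-Blaschke amount of Whitney boxes; the stopping time must instead let the aperture shrink while still capturing the \emph{actual} mass of the finite measure $\mu(\Th)$. Controlling the singular part $\s$ in this way, keeping the selected centres uniformly separated, and checking that the freshly added zeros do not themselves destroy the one-component property of $B$, is where the real work will lie.
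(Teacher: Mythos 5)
Your reduction to Theorem~\ref{Thm1} and the splitting $\mu(B\Th)=\mu(B)+\mu(\Th)$ are correct and coincide with the paper's starting point, but the actual construction of $B$ --- the simultaneous fulfilment of your (i)--(iii) --- is the entire content of the theorem, and you explicitly leave it undone (``where the real work will lie''). Moreover, the one concrete idea you offer for closing the gap goes in the wrong direction. By the necessity half of Theorem~\ref{Thm1}, if $B\Th\in\I$ then $|B\Th|$ must be bounded away from $1$ on essentially the \emph{full} fixed-aperture sawtooth over $\supp\s$: whenever the base arc of $Q(z)$ contains a point of $\supp\s$ in its interior, $\s(Q(z))>0$, hence $\mu(B\Th)(Q(z))>0$. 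Since $|\Th|$ itself need not be small on that sawtooth (for measures as in Example~\ref{Ex1} one has $|S(r)|\rightarrow 1$ radially at the point $1\in\supp\s$), it is $|B|$ that must satisfy $|B|\le C<1$ on the whole fixed-aperture region. A stopping time that ``lets the aperture shrink'' therefore leaves points $z$ with $\mu(\Th)(Q(z))>0$ at which nothing controls $|B(z)\Th(z)|$, and the implication you need fails there. In addition, your condition (iii) is imposed rather than derived: the condition of Theorem~\ref{Thm1} is not a density condition on the zero set that can simply be arranged by thinning, and verifying it for $B$ requires exactly the geometric argument you have not supplied.

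The tension you identify (covering a sawtooth versus the Blaschke condition) is real, but its resolution is not to shrink the region; it is to realize that one does not need a zero in every Whitney box of the region. The paper places the zeros along a \emph{curve}: over each Whitney arc $I_n$ of $\partial\D\setminus\sing\Th$ choose a height $r_n$ so that $|\Th|\ge 1-\e_n$ on the part of the box over $I_n$ above height $r_n$, let $\Gamma$ be the union of the arcs $\{r_n\xi:\xi\in I_n\}$ and the radial segments joining them, and put the zeros on $\Gamma$ at fixed pseudohyperbolic spacing $1/10$. The Blaschke sum of this chain is $\asymp\sum_n|I_n|<\infty$ no matter what the box dimension of $E=\sing\Th$ is, and the chain is separated with the Carleson counting condition, hence interpolating: this is how the ``many zeros versus few zeros'' conflict disappears. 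The hypothesis $|E|=0$ enters at one precise point: for $z$ in the region $\Omega$ below $\Gamma$, the arcs $I_m$ within distance $\asymp 1-|z|$ of $z/|z|$ have total length $\asymp 1-|z|$, so the chain zeros above them carry mass $\gtrsim 1-|z|$ and give $\log|B(z)|^{-1}\gtrsim 1$, i.e. $|B|\le C<1$ on all of $\Omega$, which contains $\supp\mu(\Th)$ (and, via the next step, handles the fixed sawtooth over $E$). Finally, if $|B(z)|>12/21$ then $z\notin\Omega$ and every point of $\Gamma$ lies at pseudohyperbolic distance greater than $1/2$ from $z$, which forces $Q(z)\cap\overline{\Omega}=\emptyset$ and hence $\mu(B\Th)(Q(z))=0$; Theorem~\ref{Thm1} then applies simultaneously to $B$ and to $B\Th$. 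This curve-and-chain device, with heights adapted to $\Th$ itself through the $r_n$, is the missing idea in your proposal.
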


Theorem~\ref{Thm2} implies also a negative answer to the following question posed in \cite{CM2019} by J.~Cima and R.~Mortini:
\emph{Is the singular set of any inner function in $ \I$ necessarily countable?} 

The main effort in the proof of Theorem~\ref{Thm2} is to find a suitable Blaschke product $B$. Once the zeros of $B$ are well located, the assertion can be proved quite easily by using Theorem~\ref{Thm1}.
Roughly speaking, $B$ is chosen such that $\sing B=\sing \Th$, zeros of $B$ are close enough to $\partial \D$ (depending on $\Th$) and form a chain where the  pseudohyperbolic distance of adjacent points is fixed. The detailed proof is presented in Section~\ref{Sec4}.

Recall that an analytic function $f$ in $\D$ belongs to the Nevanlinna class $\mathcal{N}$ if
     $$\sup_{0 < r<1} \int_0^{2\pi} \log^+ |f(re^{i\t})|\,d\t<\infty,$$
where $\log^+ 0=0$ and $\log^+ x=\max\{0,\log x\}$ for $0<x<\infty$. Deep results on inner functions whose derivative is in the Nevanlinna class have been recently obtained by O. Ivrii. See \cite{Ivrii2017} and \cite{Ivrii2018}. 
Applying Theorem~\ref{Thm2}, we deduce that some one-component inner functions might be bad-behaving in several ways. As an example, we show in Section~\ref{Sec4} that
$\I$ is not contained in $\{f: f'\in \mathcal{N}\}$.

\begin{corollary}\label{Coro2}
There exists $\Th \in \I$ such that $\Th' \notin \mathcal{N}$.
\end{corollary}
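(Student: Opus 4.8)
The plan is to realize the example by perturbing a badly behaved inner function into $\I$ via Theorem~\ref{Thm2}. First I would fix an inner function $\Th_0$ whose singular set has Lebesgue measure zero and whose derivative fails to lie in $\mathcal{N}$. Such a $\Th_0$ necessarily has an infinite singular set, since if $\sing\Th_0$ is a single point then $\Th_0$ is (up to a unimodular constant) an atomic singular inner function times a Blaschke product whose zeros tend radially to that point, and a direct computation shows $\Th_0'\in\mathcal{N}$ in that case; so the example genuinely requires zeros (or singular mass) spread in argument. The existence of an appropriate $\Th_0$ I would extract from O.~Ivrii's characterization of inner functions with derivative in $\mathcal{N}$ (\cite{Ivrii2017,Ivrii2018}): one picks $\Th_0$ so that Ivrii's condition fails while $\sing\Th_0$ stays of measure zero. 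Applying Theorem~\ref{Thm2} to $\Th_0$ then yields an interpolating Blaschke product $B\in\I$ with $\Th:=B\Th_0\in\I$, and it remains to prove $\Th'\notin\mathcal{N}$.

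For the transfer step I would use that $\mathcal{N}$ is an algebra which is invariant under multiplication and division by inner functions. Concretely, any inner function $u$ has Nevanlinna characteristic $T(r,u)=0$ (as $\log^+|u|=0$ and $u$ has no poles), so by the first fundamental theorem $T(r,fu)=T(r,f)+O(1)$ for every $f$; hence $fu\in\mathcal{N}\iff f\in\mathcal{N}$. In particular $B'\Th_0\in\mathcal{N}\iff B'\in\mathcal{N}$ and $B\Th_0'\in\mathcal{N}\iff \Th_0'\in\mathcal{N}$. Writing
    \begin{equation*}
    \Th'=(B\Th_0)'=B'\Th_0+B\Th_0',
    \end{equation*}
suppose for contradiction that $\Th'\in\mathcal{N}$. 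Granting the claim $B'\in\mathcal{N}$, we get $B'\Th_0\in\mathcal{N}$, whence $B\Th_0'=\Th'-B'\Th_0\in\mathcal{N}$, and therefore $\Th_0'\in\mathcal{N}$, contradicting the choice of $\Th_0$. Thus the whole argument reduces to the single assertion $B'\in\mathcal{N}$.

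To establish $B'\in\mathcal{N}$ I would exploit the explicit geometry of the zeros of $B$ coming from the proof of Theorem~\ref{Thm2}: they lie on (at most countably many) chains accumulating on $\sing\Th_0$, along each of which adjacent zeros have a fixed pseudohyperbolic distance, so that $1-|z_{n+1}|\asymp\lambda(1-|z_n|)$ decays geometrically, and moreover the chains may be pushed arbitrarily close to $\partial\D$. One then estimates the characteristic $m(r,B')=\tfrac1{2\pi}\int_0^{2\pi}\log^+|B'(re^{i\t})|\,d\t$ uniformly in $r<1$ using the logarithmic-derivative bound $|B'(z)|\le|B(z)|\sum_n (1-|z_n|^2)/(|z_n-z|\,|1-\overline{z}_n z|)$. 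A scale-by-scale estimate along a geometric chain shows that the resulting characteristic is dominated by a sum of the type
    \begin{equation*}
    \sum_n (1-|z_n|)\log\frac{1}{1-|z_n|}<\infty,
    \end{equation*}
the geometric decay guaranteeing convergence; equivalently, one checks that Ivrii's condition \emph{holds} for this specially structured $B$. Summing the contributions of the distinct chains, whose base points form the measure-zero set $\sing\Th_0$ and whose zeros may be taken close to $\partial\D$, keeps the total finite and yields $B'\in\mathcal{N}$.

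The main obstacle I expect is precisely this last claim $B'\in\mathcal{N}$. It is not automatic for interpolating Blaschke products, and making the estimate of $m(r,B')$ rigorous requires controlling the interaction between distinct chains and coordinating the rate at which the zeros of $B$ approach $\partial\D$ with the fine structure of $\sing\Th_0$; here one must use properties of the construction behind Theorem~\ref{Thm2}, not merely its statement. The complementary subtlety is producing a single $\Th_0$ that simultaneously has measure-zero singular set and $\Th_0'\notin\mathcal{N}$: as noted, its singular set must be infinite, so this step cannot be reduced to the tractable radial case and genuinely relies on Ivrii's characterization (or an equivalent explicit construction with zeros spread in argument).
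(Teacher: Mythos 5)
Your reduction is algebraically sound: the product rule together with the fact that multiplication and division by inner functions preserves membership in $\mathcal{N}$ does show that everything hinges on the single claim $B'\in\mathcal{N}$. But that claim is precisely the gap: you do not prove it, and for the Blaschke product actually produced by the proof of Theorem~\ref{Thm2} it fails, for a structural reason. In that construction the zeros of $B$ include the arcs $\{r_n\xi:\xi\in I_n\}$, and the heights $1-r_n$ are dictated by $\Th_0$: by definition $|\Th_0(z)|\ge 1-\e_n$ whenever $|z|\ge r_n$ and $e^{i\arg z}\in I_n$. Write $S_\phi(\theta)=\sum_k (1-|w_k|^2)/|e^{i\theta}-w_k|^2$ for the Ahern--Clark sum of a Blaschke product $\phi$ with zeros $\{w_k\}$; by \cite{AhernClark1974}, $\phi'\in\mathcal{N}$ amounts to $\int_0^{2\pi}\log^+S_\phi\,d\theta<\infty$ (finiteness of the integral gives $S_\phi<\infty$ a.e., which already implies the a.e.\ existence of nontangential limits of $\phi'$). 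Normalize $1-r_n\le |I_n|/8$ (legitimate, since the defining property of $r_n$ persists when $r_n$ is increased). For $e^{i\theta}$ in the middle half of $I_n$, no zero $w_k$ of $\Th_0$ can satisfy $|e^{i\theta}-w_k|<1-r_n$, since such a point would lie in the region where $|\Th_0|\ge 1-\e_n$; consequently
$$\e_n\gtrsim-\log|\Th_0(r_ne^{i\theta})|\gtrsim(1-r_n)\sum_k\frac{1-|w_k|}{|e^{i\theta}-w_k|^2+(1-r_n)^2}\gtrsim(1-r_n)\,S_{\Th_0}(\theta),$$
so $1-r_n\lesssim 1/S_{\Th_0}(\theta)$. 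Since $B$ has a zero within distance comparable to $1-r_n$ of every such $e^{i\theta}$, one gets $S_B(\theta)\gtrsim 1/(1-r_n)\gtrsim S_{\Th_0}(\theta)$, i.e.\ $\log^+S_B\ge\log^+S_{\Th_0}-O(1)$ on the middle half of every $I_n$. Hence, whenever the divergence of $\int\log^+S_{\Th_0}\,d\theta$ (which is exactly what $\Th_0'\notin\mathcal{N}$ means) is not concentrated near the endpoints of the Whitney arcs, $\int\log^+S_B\,d\theta=\infty$ as well, so $B'\notin\mathcal{N}$. This is no accident: for $B\Th_0$ to be one-component, $B$ must be small precisely where $\Th_0$ is close to $1$ near its own mass, so the zeros of $B$ are forced to shadow the bad behaviour of $\Th_0$ and $B$ inherits it. Your hoped-for bound $\sum_j(1-|z_j|)\log\frac{1}{1-|z_j|}<\infty$ fails for the same reason: it contains $\sum_n|I_n|\log\frac{1}{1-r_n}$, which diverges. (Your side remark that a one-point singular set forces $\Th_0'\in\mathcal{N}$ is also incorrect --- zeros approaching the point tangentially, with moduli tending to $1$ extremely fast, make $\int\log^+S_{\Th_0}\,d\theta$ diverge --- but that error is incidental.)

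The paper's proof avoids this trap by running the implication in the opposite, monotone direction. It takes $\Th_0$ from Pel\'aez (\cite[Theorem~4]{Pelaez2008}), a Blaschke product with measure-zero singular set and derivative outside $\mathcal{N}$; then $B\Th_0$ is again a Blaschke product whose zero set \emph{contains} that of $\Th_0$. The Ahern--Clark sum only increases when zeros are added, so the failure of the criterion for $\Th_0$ passes automatically to $B\Th_0$, giving $(B\Th_0)'\notin\mathcal{N}$ with no information about $B$ needed at all. In short: do not try to keep $B$ tame --- use that adding zeros can only make the derivative of a Blaschke product worse in the Ahern--Clark sense. That one-line monotonicity observation replaces your product-rule decomposition and is exactly the step your proposal is missing.
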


In Section~\ref{Sec5} we study one-component singular inner functions. As another application of Theorem 1, we present the following characterization of one-component singular inner functions.  

\begin{theorem}\label{singular}
Let $S$ be a singular inner function associated with a non-trivial singular measure~$\s$. Consider the set $\Omega = \{z\in \D: 1-|z|\ge 2\dist (z/|z|, \supp \s)\}$.
Then $S \in \I$ if and only if $\limsup_{z\in \Omega, |z|\rightarrow 1^-} |S(z)|<1$.
\end{theorem}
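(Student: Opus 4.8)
{\bf Sketch of proof.}

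The plan is to apply Theorem~\ref{Thm1} to the singular inner function $S$, translating the Carleson-square condition on $\mu(S)=\s$ into a statement about the size of $|S|$ on the region $\Omega$. The forward implication is the routine direction. If $S\in\I$, Theorem~\ref{Thm1} furnishes a constant $C$ with $0<C<1$ such that $\s(Q(z))=0$ whenever $|S(z)|\ge C$. The key geometric observation is that for $z\in\Omega$ the vertex $z/|z|$ is within distance $(1-|z|)/2$ of $\supp\s$, so the Carleson square $Q(z)$ (which has side length comparable to $1-|z|$) must intersect $\supp\s$; since $\s$ is non-trivial and carried by $\supp\s$, this forces $\s(Q(z))>0$ once $z$ is deep enough that the square captures positive mass. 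Consequently such $z\in\Omega$ cannot satisfy $|S(z)|\ge C$, giving $\limsup_{z\in\Omega,\,|z|\to1^-}|S(z)|\le C<1$. I would be slightly careful here: one wants the square to genuinely capture mass, so I would argue that for $z\in\Omega$ close to $\partial\D$ the point of $\supp\s$ nearest to $z/|z|$, together with a fixed fraction of its neighborhood in $\supp\s$, lies inside $Q(z)$, and use regularity of $\s$ (or pass to a slightly larger comparable square) to ensure positive mass.

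For the converse, assume $\limsup_{z\in\Omega,\,|z|\to1^-}|S(z)|=:\eta<1$ and aim to verify the criterion of Theorem~\ref{Thm1}. I must produce a constant $C\in(0,1)$ such that $\s(Q(z))=0$ whenever $|S(z)|\ge C$. Fix $C$ with $\max\{\eta,\cdot\}<C<1$, chosen close enough to $1$ that every point $z$ with $|S(z)|\ge C$ and $|z|$ large lies outside $\Omega$, i.e.\ satisfies $1-|z|<2\dist(z/|z|,\supp\s)$. This separation says the vertex $z/|z|$ is far (on the scale $1-|z|$) from $\supp\s$, hence the Carleson square $Q(z)$, whose angular width and radial depth are both of order $1-|z|$, stays clear of $\supp\s$; since $\s$ is supported on $\supp\s$, this yields $\s(Q(z))=0$. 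The content of this direction is precisely the implication ``$|S(z)|$ large $\Rightarrow$ $z\notin\Omega$,'' which is where the hypothesis $\eta<1$ enters. The main obstacle, and the step needing the most care, is establishing this implication quantitatively: one must show that the hypothesized smallness of $|S|$ on $\Omega$ propagates to the complement via the harmonic-measure/Poisson representation of $\log|S|$, so that $|S(z)|\ge C$ really does force the vertex away from $\supp\s$.

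The crux is therefore the quantitative link between $\dist(z/|z|,\supp\s)$ and $|S(z)|$. Writing
\begin{equation*}
\log\frac{1}{|S(z)|}=\int_{\partial\D}\frac{1-|z|^2}{|z-\xi|^2}\,d\s(\xi),
\end{equation*}
I would analyze this Poisson-type integral according to the distance from $z$ to $\supp\s$. When $z\in\Omega$ the integrand is large on the portion of $\supp\s$ within $O(1-|z|)$ of $z/|z|$, which is exactly the mechanism making $|S|$ small there; the hypothesis $\eta<1$ then caps how much mass $\s$ can place near any such vertex, i.e.\ it encodes a one-sided Carleson-type bound $\s(Q(z))\lesssim 1-|z|$ for $z\in\Omega$. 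Conversely, when $z\notin\Omega$ so that $\dist(z/|z|,\supp\s)\ge(1-|z|)/2$, I would split the integral into the near part (over arcs within a bounded multiple of $1-|z|$ of the vertex, which is empty or small because $\supp\s$ is far) and the far part, and combine the resulting estimate with the mass bound from $\Omega$ to control $|S(z)|$ from below and thereby fix the threshold $C$. Getting these two regimes to match up cleanly, and in particular extracting the uniform one-sided mass bound on $\s$ from the single hypothesis $\eta<1$, is the technical heart of the argument.
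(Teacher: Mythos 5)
Your overall route is the paper's: both directions are reduced to Theorem~\ref{Thm1} via the geometry of $\Omega$ versus Carleson squares, and the skeleton in your first two paragraphs is essentially a complete proof. For the sufficiency your version is in fact slightly more direct than the paper's: since $\mu(S)=\s$ lives on $\partial\D$, it is enough to note that $z\notin\Omega$ means $\dist(z/|z|,\supp\s)>(1-|z|)/2$, while every point of the closed arc $Q(z)\cap\partial\D$ lies within $(1-|z|)/2$ of the vertex $z/|z|$; hence that compact arc misses the closed set $\supp\s$ and $\s(Q(z))=0$. The paper proves the more general Corollary~\ref{coro-sing}, allowing a Blaschke factor with zeros inside $\Omega$; there one needs the stronger conclusion $Q(z)\cap\Omega=\emptyset$, which is why the paper inserts a Schwarz--Pick step passing from $\Omega$ to the pseudohyperbolic neighbourhood $K=\{z:\dist(z,\Omega)\le(1-|z|)/2\}$. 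Your necessity sketch is at the same level of rigor as the paper's own one-line argument; the honest fix for the subtlety you flag is to replace $z\in\Omega$ by a nearby point $w$ (say with $\arg w$ equal to the argument of the closest support point $\xi$ and $1-|w|\asymp 1-|z|$), so that $Q(w)$ contains a full boundary neighbourhood of $\xi$ and thus $\s(Q(w))>0$, while the Schwarz--Pick lemma keeps $|S(w)|$ close to $1$; this contradicts Theorem~\ref{Thm1}.

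However, your closing paragraph (and the end of the second) misdiagnoses the logic, and the program described there would fail if you pursued it. The implication ``$|S(z)|\ge C$ and $|z|$ large $\Rightarrow z\notin\Omega$'' is not a quantitative obstacle: once $C>\eta=\limsup_{z\in\Omega,\,|z|\to1^-}|S(z)|$, it is literally the contrapositive of the definition of $\eta$; no propagation of smallness via harmonic measure or Poisson integrals is needed, and no ``one-sided Carleson mass bound'' has to be extracted from $\eta<1$. Worse, the step you call the technical heart --- controlling $|S(z)|$ from below for $z\notin\Omega$ in order to fix the threshold $C$ --- is false as stated: take $\s=\d_1$ and $z=re^{i\theta}$ with $\theta=2(1-r)$; then $z\notin\Omega$, yet $\log|S(z)|^{-1}\asymp(1-r)^{-1}\to\infty$, so $|S|$ is in no way bounded below off $\Omega$. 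Fortunately this step is unnecessary, because the criterion of Theorem~\ref{Thm1} is an implication and points with $|S(z)|<C$ impose no constraint wherever they lie. The only genuine loose end, which your placeholder $\max\{\eta,\cdot\}$ hints at but never resolves, is that Theorem~\ref{Thm1} requires $\s(Q(z))=0$ for \emph{all} $z$ with $|S(z)|\ge C$, including those with $|z|\le r_0$; this is settled by also taking $C$ larger than $\sup_{|z|\le r_0}|S(z)|$, which is strictly less than $1$ by compactness.
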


Note that the set $\Omega$ in the statement is a sawtooth region, as the following figure shows.
\begin{figure}[h!]
\label{pic}
\begin{center}
\includegraphics[width=10cm]{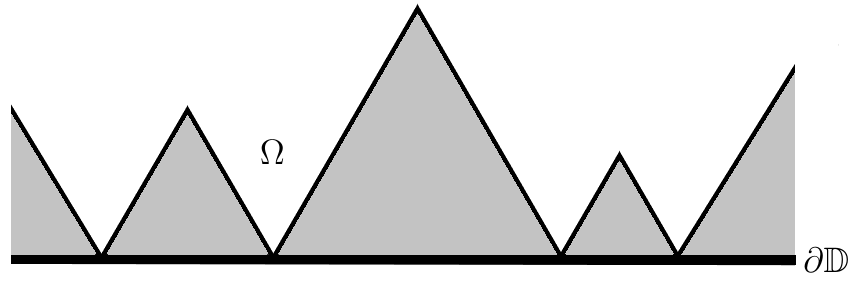}
\caption{The region $\D \setminus \Omega$, painted in grey, consists of a union of tents whose bottoms are the complementary arcs of $\supp \s$ in $\partial \D$. Each tent has the same shape, but size depends on the length of the complementary arc. }
\end{center}
\end{figure}

Using Theorem~\ref{singular}, we show that any singular inner function associated to a Cantor measure in a symmetric Cantor set is one-component. On the other hand, also as an application of Theorem~\ref{singular}, we construct discrete measures whose associated singular inner functions are not one-component. Theorem~\ref{singular} also implies the following result which provides an  affirmative answer to another question of J.~Cima and R.~Mortini (Question 4.4 i) in \cite{CM2019}).

\begin{corollary}\label{countable}
Let $E$ be a closed countable set of the unit circle. Let $\s$ be a positive singular measure supported on $E$ such that $\s (\{\xi\})>0$ for any $\xi \in E$. Then the singular inner function associated to $\s$ is one-component. 
\end{corollary}

We finish the paper in Section~\ref{Sec5} with an open question.

\section{Proofs of Theorem~\ref{Thm1}, Corollaries~\ref{Coro1}~and~\ref{Coro1b}}\label{Sec3}

We first fix some notation. Consider the  pseudohyperbolic distance $\r (z,w)$ between the points  $z,w\in \D$ given by  $\r(z,w)=|(z-w)/(1-\overline{z}w)|$. 
A sequence of points $\{z_n\}\subset \D$ is called separated if there exists $\d=\d(\{z_n\})\in (0,1)$ such that $\r(z_j,z_k)>\d$ for all distinct $j$ and $k$.
Moreover, we write $f \lesssim g$ if there exists an absolute constant $C>0$ such that $f\le C g$, while $f \gtrsim g$ is understood analogously.
If $g \lesssim f \lesssim g$, then the notation $f \asymp g$ is used.

\medskip

\noindent
\emph{Proof of Theorem~\ref{Thm1}.} Let us first prove the necessity.
Assume contrary that there exists a sequence of points $\{z_n\}_{n=1}^\infty \subset \D$ such that  $\mu(\Th) (Q(z_n))>0$ and $|\Th(z_n)| \rightarrow 1^-$ as $n \rightarrow \infty$. Since $\Th\in \I$, we find $C\in (0,1)$ such that the level set $\Omega=\{z\in \D : |\Th(z)|\le C\}$ is connected. Given a domain $A \subset \C$ and a subset $E \subset \partial A$, let $\om(z,E,A)$ be the harmonic measure of $E$ in the domain $A$, that is, the harmonic function in $A$ whose boundary values are identically $1$ on $E$ and identically zero on $\partial A \setminus E$. Since $\Th$ is inner,  the subharmonicity of $\log |\Th(z)|$  gives that $\log |\Th (z)| \leq (\log C ) \, \om(z,\partial \Omega,\D \setminus \Omega)$ for any $z \in \D \setminus \Omega$. Then by  Hall's lemma (see \cite[Chap.~12, Lemma~4]{Duren1970}), there exists an absolute constant $C_1>0$ such that
    \begin{equation}\label{Eq:Thm1-1}
    \begin{split}
    \log |\Th(z)|\le 
    C_1 (\log C) \, \om(z,(\partial \Omega)^*,\D), \quad z\in \D \setminus \Omega,
    \end{split}
    \end{equation}
where $(\partial \Omega)^*=\{z/|z|: z\in \partial \Omega\}$ is the radial projection of $\partial \Omega$. Since $\mu(\Th) (Q(z_n))>0$ and the diameter of the connected set  $\Omega$ depends only on $\Th$ and $C$,
we find $N\in \N$ such that $|(\partial \Omega)^* \cap 2Q(z_n)|\ge (1-|z_n|)/2$ for $n \ge N$. Consequently,
    \begin{equation}\label{Eq:Thm1-2}
    \begin{split}
    \om(z_n ,(\partial \Omega)^*,\D)&=\int_{(\partial \Omega)^*} \frac{1-|z_n|^2}{|e^{i\t}-z_n|^2}\, \frac{d\t}{2\pi}
    \ge \int_{(\partial \Omega)^* \cap 2Q(z_n)} \frac{1-|z_n|^2}{|e^{i\t}-z_n|^2}\, \frac{d\t}{2\pi} \\
    &\asymp \frac{|(\partial \Omega)^* \cap 2Q(z_n)|}{1-|z_n|}\ge \frac{1}{2}, \quad n \ge N.
    \end{split}
    \end{equation}
Combining estimates~\eqref{Eq:Thm1-1}~and~\eqref{Eq:Thm1-2}, we deduce that there exists a positive constant $D<1$ such that $|\Th(z_n)| \le D$ for all $n \ge N$.
This contradiction finishes the proof of the necessity. 

Next we prove the sufficiency. Pick a constant $C_1\in (0,1)$ such that $(1-C_1)/(1-C)$ is very small. In particular, we assume $C_1>(C+9/10)/(1+9C/10)$. Consider the decomposition of $\D$ into dyadic Carleson squares
	$$Q_{n,k}=\left\{re^{i\t}: 1-\pi 2^{-n}\le r<1, 2\pi k2^{-n}\le \t<2\pi(k+1)2^{-n} \right\},$$
where $n\ge 2$ and $0\le k < 2^{n}$. Let  $\mathcal{T}(Q_{n,k}) =\{z\in Q_{n,k}:|z|\le 1-\pi 2^{-n-1}\}$ denote the top half of $Q_{n,k}$.  Let $\mathcal{G}=\{Q_j\}$ be the collection of maximal dyadic Carleson squares  such that
	$\sup_{z\in \mathcal{T}(Q_{j})} |\Th(z)|\ge C_1$.
If $w \in \D$ satisfies $\r(w,\mathcal{T}(Q_j)) \le 9/10$ for some $j$, then $|\Th(w)|\ge C$.
This is easy to deduce from the estimate
    \begin{equation*}
    \begin{split}
    \r(w,\mathcal{T}(Q_j))\ge \inf_{z\in Q_j}\left|\frac{\Th(z)-\Th(w)}{1-\overline{\Th}(z)\Th(w)}\right|\ge  \inf_{z\in Q_j}\frac{|\Th(z)|-|\Th(w)|}{1-|\Th(z)||\Th(w)|},
    \end{split}
    \end{equation*}
where the first inequality is due to the Schwarz-Pick lemma. Thus our hypothesis implies $\mu(\Th) (Q(w))=0$ when $w$ is as above.
In particular, $\mu (\Th) (2Q_j)=0$ for all $j$.

Let $z_j$ be the center of $\mathcal{T}(Q_j)$. By the Schwarz-Pick lemma, we find $C_2=C_2(C_1)\in (0,1)$ with $C_2=C_2(C_1) \rightarrow 1^-$ as $C_1 \rightarrow 1^-$, 
such that $|\Th(z_j)|\ge C_2$ for all $j$.
Next we show that there exists a universal constant $C_3 >0$ such that
	 \begin{equation}\label{Eq:Thm1-3}
   |\Th(z)|\ge |\Th(z_j)|^{C_3}\ge C_2^{C_3}, \quad z\in Q_j.
    \end{equation}
Applying the fact that $\mu (\Th) (2Q_j)=0$ for all $j$, we obtain
	  \begin{equation*}
    \begin{split}
    \log |\Th(z)|^{-1}& \asymp \int_{\overline{\D}} \frac{1-|z|^2}{|1-\overline{w}z|^2} d\mu (\Th) (w)=\int_{\overline{\D}\setminus 2Q_j} \frac{1-|z|^2}{|1-\overline{w}z|^2} \,d\mu (\Th)(w) \\
    &\asymp \frac{1-|z|^2}{1-|z_j|^2}\int_{\overline{\D}\setminus 2Q_j} \frac{1-|z_j|^2}{|1-\overline{w}z_j|^2}\, d\mu (\Th)(w) \\
    &\asymp \frac{1-|z|}{1-|z_j|} \log |\Th(z_j)|^{-1}, \quad z\in Q_j.
    \end{split}
    \end{equation*}
Since $1-|z| \lesssim 1-|z_j|$, estimate \eqref{Eq:Thm1-3} holds.

Set $\Omega_1=\{z\in \D: |\Th(z)|<C_1\}$.  We show that $\Omega_1$ is connected arguing by contradiction. Let $G $ be the union of all Carleson squares in the family $\mathcal{G}$. 
First we note that $\D \setminus G \subset \Omega_1$ by the construction of $\mathcal{G}$. 
Since $\D \setminus G$ is connected, we find a connected component $\Omega_2$ of $\Omega_1$ with $\D \setminus G \subset \Omega_2$.
Assume that $\Omega_3$ is a connected component of $\Omega_1$ satisfying $\Omega_2 \cap \Omega_3 = \emptyset$. In particular,
$\Omega_3 \subset G$. Hence estimate \eqref{Eq:Thm1-3} gives
    \begin{equation}\label{Eq:Thm1-4}
   \Omega_3\subset \{z\in \D: |\Th(z)|\ge C_2^{C_3}\}.
   \end{equation}
By the maximum principle $\Omega_3$ is simply connected and we can consider a conformal mapping $\vp:\D \rightarrow \Omega_3$.
Then $g=C_1^{-1}\Th \circ \vp$ is an inner function. This fact was noted in the proof of \cite[Corollary~1.2]{Cohn1982}
and it follows essentially from \cite[Theorem~VIII.~31]{Tsuji1975}.
Now \eqref{Eq:Thm1-4} implies $g \equiv 1$, which
is a contradiction. Thus $\Omega_1$ is connected and the proof is complete. \hfill$\Box$

\medskip

Next we prove Corollaries~\ref{Coro1}~and~\ref{Coro1b}.

\medskip

\noindent
\emph{Proof of Corollary~\ref{Coro1}.}
Assume contrary that $\Th$ is not a one-component inner function. By Theorem 1, there exists a sequence  $\{z_j\}_{j=1}^\infty \subset \D$ such that  $\mu(\Th) (Q(z_n))>0$ for every $n$ and $|\Th(z_j)| \rightarrow 1^-$ as $j \rightarrow \infty$.
By the assumption we have $\{z\in \D: |\Th(z)|<C\}=\bigcup_{n=1}^N \Omega_n$ where  $\Omega_n \subset \D$ are connected sets.
Then $Q(z_j)\cap \Omega_k \neq \emptyset$ for some $k\in \{1,\ldots , N\}$ and all $j\in \N$. Applying Hall's lemma, we obtain
	\begin{equation*}
    \begin{split}
    \log |\Th(z)|\le \log C\, \om(z,\partial \Omega,\D \setminus \Omega_k)\le \log C\, \om(z,(\partial \Omega_k)^*,\D), \quad z\in \D \setminus \Omega_k,
    \end{split}
    \end{equation*}
where the notation is same as in \eqref{Eq:Thm1-1}. Again arguing as in the proof of Theorem~\ref{Thm1}, one can find   a constant $D<1$ such that
 $|\Th(z_j)| \le D$ for all $j$ sufficiently large. Since this is a contradiction, the assertion is proved. \hfill$\Box$

\medskip

\noindent
\emph{Proof of Corollary~\ref{Coro1b}.}
We can assume $e^{i \theta} = 1$. Let $0<\b<\pi/2$ and let $\Gamma_\b$ be a cone in $\D$ with aperture $2\b$ and vertex at 1.
Assume without loss of generality that the zeros of $B$ are contained in $\Gamma_\b$.

Let us first prove the necessity. Assume contrary that exists a sequence $\{r_n\}_{n=1}^\infty\subset (0,1)$ such that 
$|B(r_n)|\rightarrow 1^-$ as $n\rightarrow \infty$. 
Since $\mu(B)(Q(r_n))>0$ for any $n\in \N$, Theorem~\ref{Thm1} implies $B\notin \I$. This is a contradiction and the necessity is proved.

Next we prove the sufficiency. Let us  define $\Gamma_\g$ in a similar way as $\Gamma_\b$, and choose $\g=\g(\b)$, $\b<\g<\pi/2$, such that $Q(z)$ is contained in $\overline{\D}\setminus \Gamma_\b$ when $z\in \D\setminus \Gamma_\g$.
\begin{figure}[h!]
\label{fic2}
\begin{center}
\includegraphics[width=9cm]{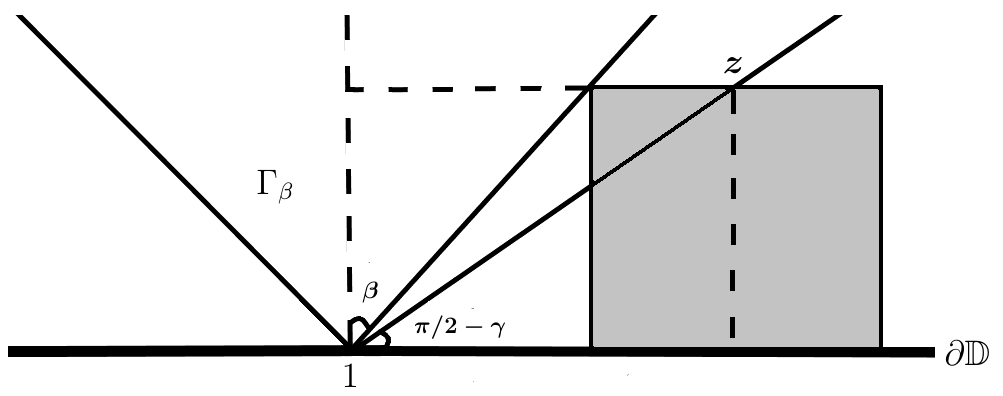}
\caption{The picture shows the cone $\Gamma_\b$ and the Carleson square $Q(z)$.}
\end{center}
\end{figure}

\noindent
Using the Schwarz-Pick lemma together with the assumption $\limsup_{r\rightarrow 1^-}|B(r)|<1$, we find a constant $C=C(\g)<1$ such that $\limsup_{z\in \Gamma_\g, z\rightarrow 1^-}|B(z)|\le C$. By Theorem~\ref{Thm1} it suffices to show that $\mu(B) (Q(z))=0$ for $z\in \D\setminus \Gamma_\g$.
Since this follows from the choice of $\g$, the proof is complete.   
\hfill$\Box$

\section{Proofs of Theorem~\ref{Thm2} and Corollary~\ref{Coro2}}\label{Sec4}

We go directly to the proofs.

\medskip

\noindent
\emph{Proof of Theorem~\ref{Thm2}.} We first consider a Whitney type decomposition of the open set  $\partial \D \setminus \sing \Th= \bigcup_{n=1}^\infty I_n$, where each $I_n$ is a closed arc on $\partial \D$
satisfying $|I_n| \asymp \dist (I_n, \sing \Th)$. 

Next we construct the Blaschke product $B$. Fix a sequence $\{\e_n\}\subset (0,1)$ with $\lim_{n \to \infty} \e_n=0$. 
Let us choose  $r_n\in (0,1)$ such that,  if $|z|\ge r_n$ and $e^{i \arg z}\in I_n$, then  $|\Th(z)|\ge 1-\e_n$. 
Let $\Gamma$ be the curve containing $\bigcup_{n=1}^\infty \{r_n\xi: \xi \in I_n\}$ and the radial segments connecting arcs $\{r_n\xi: \xi \in I_n\}$ in the natural way.
Now locate the zeros $\mathcal{Z}(B)=\{z_j\}_{j=1}^\infty$ of $B$ on the curve $\Gamma$ such that, for each $z_j$, there exist distinct zeros $z_m, z_l$ satisfying $\r(z_j,z_m)=\r(z_j,z_l)=1/10$,
while other zeros are further away from $z_j$. In other words, place the zeros of $B$ in the curve $\Gamma$ at each $1/10$ pseudohyperbolic units. 

\begin{figure}[h!]
\label{pic}
\begin{center}
\includegraphics[width=10cm]{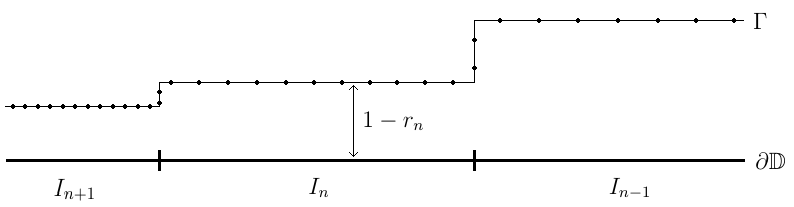}
\caption{The picture shows the curve $\Gamma$ where the zeros of $B$, represented by points, are located.}
\end{center}
\end{figure}

Let us recall that a sequence $\{z_j\}$ is uniformly separated if and only if $\{z_j\}$ is separated
and $\sum_{z_j\in Q(z)} (1-|z_j|) \lesssim 1-|z|$ for all $z\in \D$ (see \cite[Chap.~VII,~Theorem~1.1]{Garnett1981}). Using this result together with the fact that the pseudohyperbolic distance of adjacent
points in $\mathcal{Z}(B)$ is $1/10$, one checks that $B$ is an interpolating Blaschke product.

Let $0 < C < 1$ be a constant close to $1$ to be fixed later. Since we will prove the assertion by Theorem~\ref{Thm1}, points $z$ where $|B(z)|\le C$ are not relevant.
In particular, this is the case in $\Omega=\bigcup_{n=1}^\infty \{r\xi: r\le r_n, \xi\in I_n\}$, as the following argument shows. Fix $z \in \Omega$.  If $\r(z, \mathcal{Z}(B))\le 1/2$,
then $|B(z)|\le \r(z, \mathcal{Z}(B))\le 1/2$ by Schwarz's Lemma.
Hence we may assume that $z\in \Omega$ satisfies $\r(z, \mathcal{Z}(B))>1/2$. Then standard estimates give
    \begin{equation*}
    \begin{split}
    \log |B(z)|^{-1} &= \sum_{j=1}^\infty \log \r(z,z_j)^{-1} \asymp\sum_{j=1}^\infty\left(1-\r(z,z_j)^2\right) \\
    &=\sum_{j=1}^\infty \frac{(1-|z|^2)(1-|z_j|^2)}{|1-\overline{z}_jz|^2} \\
    &\gtrsim  \sum_{\{j :|z_j-z|\le 2(1-|z|)\}} \frac{1-|z_j|}{1-|z|} \gtrsim 1,
    \end{split}
    \end{equation*}
where the last inequality is due to the location of zeros $z_j$.
Consequently, there exists a constant $C<1$ such that $|B(z)|\le C$ for any $z \in \Omega$. 

Let $z \in \D \setminus \Omega$ and assume $|B(z)|> 12/21$. If $w\in \Gamma$, then the pseudohyperbolic triangle inequality, the Schwarz-Pick lemma
and the location of zeros give
    \begin{equation*}
    \begin{split}
    \r(z,w)\ge \inf_{v\in \mathcal{Z}(B)}\frac{\r(z,v)-\r(v,w)}{1-\r(z,v)\r(v,w)} \ge \frac{|B(z)|-1/10}{1-|B(z)|/10}>\frac{1}{2}.
    \end{split}
    \end{equation*}
It is easy to check that $w$ satisfies $2|z-w|> 1-|z|$ if $\r(w,z)>1/2$. This implies that $\mu (B) (Q(z))=0$ when $|B(z)|>12/21$. 
Hence $B$ belongs to $\I$ by Theorem~\ref{Thm1}. 

By the choice of $\Gamma$, it is obvious that possible zeros of $\Th$ are in $\Omega$. Hence the previous argument also applies to $B\Th $ and again by Theorem~\ref{Thm1}, $B \Th $ is a one-component inner function. This completes the proof. \hfill$\Box$

\medskip

\noindent
\emph{Proof of Corollary~\ref{Coro2}.}
Let $\Th$ be a Blaschke product whose singular set has measure zero such that $\Th'\notin \mathcal{N}$. See  \cite[Theorem~4]{Pelaez2008}. By our Theorem~\ref{Thm2}, we can  find a Blaschke product $B$
such that $|\sing B|=0$ and  $B\Th \in \I$.
Now it suffices to deduce $(B\Th)' \notin \mathcal{N}$ by applying the following consequence of \cite[Theorem~2~and~Corollary~4]{AhernClark1974}:
The derivative of a Blaschke product $\phi$ belongs to $\mathcal{N}$ if and only if the non-tangential limit of $\phi'$ exists almost everywhere on $\partial \D$ and
    $$\int_0^{2\pi} \log^+ \left(\sum_n \frac{1-|z_n|^2}{|e^{i\t}-z_n|^2}\right)d\t<\infty,$$
where $\{z_n\}$ is the zero-sequence of $\phi$.
 \hfill$\Box$

\medskip

For $0<p<\infty$ and $-1<\a<\infty$, the Bergman space $A_\a^p$ consists of those analytic functions in $\D$ such that
    $$
    \|f\|_{A^p_\a}^p=\int_\D|f(z)|^p(1-|z|)^\a\,dm(z)<\infty,
    $$
where $dm(z)$ is the Lebesgue area measure on $\D$. Note that  \cite[Theorem~10]{R2019} implies the inclusion
$$\left\{\Th\in \I:\Th'\in \bigcup_{-1<\a<\infty}\bigcup_{\a+1<p<\infty}A_\a^p\right\} \subset \left\{\Th\in \I: \Th'\in \mathcal{N}\right\}.$$
Hence using Corollary~\ref{Coro2} one can construct one-component inner functions whose derivative does not belong to certain Bergman spaces.

\section{One-component singular inner functions}\label{Sec5}

We begin with another  consequence of Theorem~\ref{Thm1}, which is a slight more general version of Theorem~\ref{singular} stated in the Introduction. 

\begin{corollary}\label{coro-sing}
Let $\Th=BS$, where $B$ is  a Blaschke product with zeros $\{z_n\}$ and $S$ is a singular inner function associated with a non-trivial singular measure $\s$.
Assume that $\{z_n\}\subset \Omega := \{z\in \D: 1-|z|\ge 2\dist (z/|z|, \supp \s)\}$.
Then $\Th\in \I$ if and only if $\limsup_{z\in \Omega, |z|\rightarrow 1^-} |\Th(z)|<1$.
\end{corollary}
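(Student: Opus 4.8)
The plan is to apply Theorem~\ref{Thm1}, so the entire task reduces to translating the condition ``$\mu(\Th)(Q(z))=0$ whenever $|\Th(z)|\ge C$'' into the stated condition about the behaviour of $|\Th|$ on the sawtooth region $\Omega$. The key geometric observation I would establish first is that the support of the measure $\mu(\Th)=\sum_n(1-|z_n|)\d_{z_n}+\s$ is essentially concentrated in a region comparable to $\Omega$. Indeed, the singular part $\s$ lives on $\supp\s\subset\partial\D$, and by hypothesis all Blaschke zeros $\{z_n\}$ lie in $\Omega$; so the carrier of $\mu(\Th)$ is contained in $\overline{\Omega}$. The crucial point is then to show that for a Carleson square $Q(z)$ to carry any mass of $\mu(\Th)$, the vertex $z$ itself must lie in (a region comparable to) $\Omega$, and conversely that points of $\Omega$ deep enough inside have their Carleson squares meeting $\supp\s$ or the zero set.

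First I would make precise the relationship between $z$ lying in $\Omega$ and $Q(z)$ meeting $\supp\s$. By definition $z\in\Omega$ means $1-|z|\ge 2\dist(z/|z|,\supp\s)$, which says the radial projection $z/|z|$ is within half the sidelength $(1-|z|)$ of $\supp\s$; a comparison of scales then shows that $Q(z)$, whose angular width is of order $1-|z|$, intersects $\supp\s$. Conversely, if $z\notin\Omega$ then $\dist(z/|z|,\supp\s)>(1-|z|)/2$, and one checks that $Q(z)$ stays away from $\supp\s$; since the zeros also lie in $\Omega$, a routine comparison of scales (passing to $2Q(z)$ or shrinking the constant in $\Omega$ as needed) gives $\mu(\Th)(Q(z))=0$ for such $z$. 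This dichotomy is what lets me localize the Theorem~\ref{Thm1} condition to $\Omega$.

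With this localization in hand, the two implications follow. For the necessity, suppose $\Th\in\I$. If the $\limsup$ over $\Omega$ were equal to $1$, there would be a sequence $z_k\in\Omega$ with $|z_k|\to1$ and $|\Th(z_k)|\to1$; but each $z_k\in\Omega$ has $Q(z_k)$ meeting $\supp\s$ (and the singular measure is non-trivial, so one can arrange positive mass, using that near $\supp\s$ the Carleson squares pick up $\s$-mass or nearby zeros), contradicting the Theorem~\ref{Thm1} conclusion that $\mu(\Th)(Q(z_k))=0$ for $|\Th(z_k)|\ge C$. For the sufficiency, assume $\limsup_{z\in\Omega,|z|\to1}|\Th(z)|<1$, so there is $C'<1$ and a radius $r_0$ with $|\Th(z)|\le C'$ for all $z\in\Omega$ with $|z|\ge r_0$. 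Then any $z$ with $|\Th(z)|\ge C$ for $C>C'$ close to $1$ must lie outside $\Omega$ (at least for $|z|$ near $1$; the finitely many zeros and the compact inner part handle the bounded region $|z|\le r_0$), and by the localization its Carleson square carries no mass, so Theorem~\ref{Thm1} yields $\Th\in\I$.

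The main obstacle I anticipate is the careful bookkeeping of constants in the geometric comparison, specifically handling the factor-of-$2$ slack in the definition of $\Omega$ versus the factor-of-$2$ dilations $2Q(z)$ that appear in the proof of Theorem~\ref{Thm1}. One must verify that the region where $Q(z)$ meets $\supp\s$ is genuinely comparable to $\Omega$ up to adjusting these absolute constants, and that the non-triviality of $\s$ really forces positive $\mu(\Th)$-mass in the squares along the approach sequence rather than merely nonempty intersection with $\supp\s$. A secondary technical point is ruling out pathologies near the boundary of the tents (the vertices of the complementary arcs), but since $|I_n|\asymp\dist(I_n,\supp\s)$-type scale comparisons are built into the sawtooth geometry, this should reduce to the same constant-tracking argument.
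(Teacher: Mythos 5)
Your overall strategy --- reducing everything to Theorem~\ref{Thm1} via a geometric localization of $\mu(\Th)$ --- is exactly the paper's, and your necessity argument agrees with the paper's proof (including the same glossed subtlety, which you at least flag, that a square meeting $\supp\s$ only at an endpoint of its boundary arc may carry no mass; a small recentering of the square plus the Schwarz--Pick lemma fixes this). The genuine gap is in the sufficiency direction, precisely in the step you dismiss as a ``routine comparison of scales.'' The claim that $z\notin\Omega$ implies $\mu(\Th)(Q(z))=0$ is \emph{false} for the atomic part of $\mu(\Th)$: the square $Q(z)$ reaches all the way up to $\partial\D$, where the sawtooth $\Omega$ is much thinner than at the height of $z$, so $Q(z)$ can dip into $\Omega$ even though its vertex $z$ lies outside $\Omega$. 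Concretely, with $\supp\s=\{1\}$, take $z$ just outside the tent, i.e.\ $\dist(z/|z|,1)$ barely larger than $(1-|z|)/2$; the near end of the arc of $Q(z)$ then comes within an arbitrarily small distance $\e$ of the point $1$, and any zero $z_n$ with $\arg z_n \approx \e$ and $2\e\lesssim 1-|z_n|\le 1-|z|$ lies in $\Omega\cap Q(z)$. What is true is only the weaker localization: $\mu(\Th)(Q(z))>0$ forces $z$ into the \emph{larger} sawtooth $\{z: 1-|z|\ge \dist(z/|z|,\supp\s)\}$. But your hypothesis bounds $|\Th|$ only on $\Omega$, not on this larger region, so for $z$ in the gap between the two sawtooths you cannot exclude the forbidden configuration $|\Th(z)|\ge C$ together with $\mu(\Th)(Q(z))>0$.

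This mismatch cannot be repaired by adjusting absolute constants (dilating to $2Q(z)$, or changing the $2$ in the definition of $\Omega$): for every choice of constants, the region where squares can carry mass is strictly larger than the region where the hypothesis controls $|\Th|$. The missing ingredient is analytic, and it is where the paper's proof does its work: one first propagates the bound $|\Th|\le C$ on $\Omega$ to a bound $|\Th|\le D<1$ on a hyperbolic thickening $K=\{z\in\D: \dist(z,\Omega)\le (1-|z|)/2\}$ of $\Omega$, using the Schwarz--Pick lemma (every point of $K$ lies at pseudohyperbolic distance bounded away from $1$ from some point of $\Omega$; here one also uses that $\Omega$ is radially ``downward closed'' to produce a good witness point). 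Only then does the geometry finish the job: $|\Th(z)|> D$ forces $z\notin K$, hence $Q(z)\cap\Omega=\emptyset$, hence $\mu(\Th)(Q(z))=0$ because both the zeros and (by radial closedness) $\supp\s$ are captured through $\Omega$, and Theorem~\ref{Thm1} applies. Your proposal never invokes Schwarz--Pick in the sufficiency direction, so as written it does not close.
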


\begin{proof}
The necessity  
follows from Theorem~\ref{Thm1} since $Q(z) \cap \supp \s$ is non-empty when $z \in \Omega$.
Hence we only need to prove the sufficiency. By the assumption, there exists a constant $C<1$ such that $|\Th(z)|\le C$ for $z\in \Omega$. Set $K=\{z\in \D: \dist(z,\Omega)\le (1-|z|)/2\}$. Applying the Schwarz-Pick lemma, we find a constant $D < 1$ such that $|\Th(z)|\le D$ for $z\in K$. Next we show that $\Th$ is one-component applying Theorem~\ref{Thm1}. Assume that $|\Th(z)|\ge D$.
Then $z\in \D \setminus K$, and it follows that $Q(z) \cap \Omega =\emptyset$, which implies $\mu(\Th)  (Q(z))=0$. Consequently, $\Th$ is a one-component inner function. 
\end{proof}

Note that the necessity in Corollary~\ref{Coro1b}~or~\ref{coro-sing} follows also from \cite[Lemma~6.1]{Aleksandrov1989} by A.~Aleksandrov,
and this Aleksandrov's result originates from the proof of \cite[Theorem~3]{VT1988} by S.~Treil and A.~Volberg.

\medskip

\noindent
\emph{Proof of Corollary~\ref{countable}.} Let $E=\{\xi_n : n=1,2,\ldots\}$. Then 
$\s=\sum_{n=1}^\infty \a_n \d_{\xi_n}$, where $a_n >0$.  Corollary~\ref{coro-sing} and \cite[Chap.~II, Theorem~6.2]{Garnett1981} show that the singular inner function associated to $\s$ is one-component.  \hfill$\Box$

\medskip

The following example shows that even quite basic singular inner functions may lie out of $\I$.

\begin{example}\label{Ex1}
Let $\{\t_n\}_{n=1}^\infty$ and $\{\a_n\}_{n=1}^\infty$ be sequences of distinct points on $(0,1)$ satisfying
$\lim_{n \rightarrow \infty} \t_n = 0$ and $\sum_{n=1}^\infty \a_n \t_n^{-2}<\infty$.
Set $S$ be the singular inner functions associated with the measure $\s=\sum_{n=1}^\infty \a_n \d_{e^{i\t_n}}$.
Since
    \begin{equation*}
    \begin{split}
    |S(r)|=\exp\left(-\sum_{n=1}^\infty \a_n \frac{1-r^2}{|e^{i\t_n}-r|^2}\right) \ge \exp\left(-3(1-r^2)\sum_{n=1}^\infty \a_n \t_n^{-2}\right) \longrightarrow 1^-, \quad r \rightarrow 1^-,
    \end{split}
    \end{equation*}
the function $S$ does not belong to $\I$ by Corollary~\ref{coro-sing}.

\end{example}

Next we will show that the singular inner function associated to the Cantor measure on a symmetric Cantor set, is one-component.  Let us recall the construction of symmetric Cantor measures $\s$ associated with a sequence $\{\d_n\}$, which were studied for instance in \cite{Ahern1979} by P.~Ahern.
\begin{itemize}
\item Let $\{\d_n\}_{n=0}^\infty$ be a strictly decreasing sequence such that $\d_0=2\pi$ and $\lim_{n\rightarrow \infty} \d_n=0$.
\item Set $E_0=[0,2\pi]$ and $n\in \N$. Define $E_n$ inductively as follows:
$E_n$ consists of $2^{n}$ pairwise disjoint intervals each of length $2^{-n}\d_n$ and $E_{n+1}$ is obtained (from $E_n$) by removing a segment
from each interval of $E_n$. Write $E=\bigcap_{n=0}^\infty E_n$.
\item Define the non-decreasing function $\vp:[0,2\pi] \rightarrow [0,1]$ as follows:
$\vp(0)=0$, $\vp(2\pi)=1$, $\vp$ is a constant on each interval of $[0,2\pi]\setminus E$ and $\vp$ increases by an amount of $2^{-n}$
on each intervals of $E_n$.
\item For $0\le a\le b\le 2\pi$, define the measure $\s$ by $\s((a,b)) = \vp(b)-\vp(a)$.
\end{itemize}
For instance $E$ is the Cantor middle third set if $\d_n=2\pi(2/3)^n$ for all $n\in \N$. 

\begin{corollary}\label{cor3}
If $S$ is a singular inner function associated with a symmetric Cantor measure, then $S \in \I$.
\end{corollary}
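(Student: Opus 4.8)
The plan is to apply Theorem~\ref{singular} (equivalently, Corollary~\ref{coro-sing} with $B\equiv 1$), so the entire task reduces to verifying that the Cantor measure $\s$ satisfies
$$\limsup_{z\in \Omega,\ |z|\to 1^-} |S(z)| < 1,$$
where $\Omega = \{z\in\D : 1-|z| \ge 2\dist(z/|z|,\supp\s)\}$. Since $|S(z)| = \exp\bigl(-\int_{\partial\D} \frac{1-|z|^2}{|\xi - z|^2}\,d\s(\xi)\bigr)$, it suffices to produce an absolute constant $c>0$ such that
$$\int_{\partial\D} \frac{1-|z|^2}{|\xi-z|^2}\, d\s(\xi) \ge c \qquad \text{for all } z\in\Omega \text{ with } |z| \text{ close to } 1.$$
First I would reduce to controlling the Poisson mass over a single arc. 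For $z\in\Omega$ write $h = 1-|z|$ and let $\zeta = z/|z| = e^{i\t}\in\partial\D$ be its radial projection; the defining condition of $\Omega$ says $\dist(\zeta,\supp\s)\le h/2$, so the point of $\supp\s$ nearest to $\zeta$ lies within arc-length comparable to $h$. The Poisson kernel at $z$ is bounded below by $\gtrsim 1/h$ on the arc $I(\zeta,h) = \{e^{is} : |s-\t|\le h\}$ centered at $\zeta$ of length $2h$. Hence
$$\int_{\partial\D} \frac{1-|z|^2}{|\xi-z|^2}\, d\s(\xi) \gtrsim \frac{\s\bigl(I(\zeta,h)\bigr)}{h},$$
and the whole problem collapses to the mass estimate: I need a lower bound $\s(I(\zeta,h)) \gtrsim h$ that is uniform over all such centers $\zeta$ (those within $h/2$ of $\supp\s$) and all small $h$.

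The key step is therefore the \emph{lower regularity} of the symmetric Cantor measure, and this is where I would spend the real effort. Using the self-similar construction, at generation $n$ the support $E$ is covered by $2^n$ intervals of $E_n$, each of length $2^{-n}\d_n$, and $\s$ places mass exactly $2^{-n}$ on each. Given $h$, I would choose $n=n(h)$ so that the generation-$n$ intervals have length comparable to $h$, i.e. $2^{-n}\d_n \asymp h$; equivalently $n$ is determined by where $\d_n/\d_{n-1}$ places the scale $h$ relative to the dyadic subdivision. Then an arc of length $2h$ centered within $h/2$ of $\supp\s$ must contain at least one full generation-$n$ interval of $E_n$ (up to adjusting $n$ by a bounded amount), and that interval already carries $\s$-mass $2^{-n} \asymp h/\d_n$. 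Combining, $\s(I(\zeta,h))/h \gtrsim 2^{-n}/h \asymp 1/\d_n$; since $\d_n\to 0$ this is at least bounded below, giving the desired uniform constant $c>0$.

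The main obstacle I anticipate is the bookkeeping in matching the continuous scale $h$ to the discrete generations $\{\d_n\}$: because the sequence $\{\d_n\}$ is only assumed strictly decreasing to $0$ (not geometric), the ratio $\d_{n+1}/\d_n$ need not be bounded away from $0$ or $1$, so the two natural scales at level $n$ — the interval length $2^{-n}\d_n$ and the gap sizes introduced when passing from $E_n$ to $E_{n+1}$ — may be wildly different, and I must argue that \emph{some} generation yields an interval of length comparable to $h$ sitting inside $I(\zeta,h)$. I would handle this by choosing $n$ as the largest index with $2^{-n}\d_n \ge h$ and checking that a generation-$n$ interval meeting the $h/2$-neighborhood of $\zeta$ is contained in $I(\zeta,h)$, then reading off its mass; the monotonicity $\d_n\downarrow 0$ guarantees the resulting lower bound $1/\d_n$ does not degenerate. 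Once this uniform mass bound is in hand, Theorem~\ref{singular} immediately yields $S\in\I$.
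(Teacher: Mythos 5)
Your proposal is correct and is essentially the paper's own argument: the paper likewise reduces to Corollary~\ref{coro-sing}, bounds the Poisson integral from below by the $\s$-mass of an enlarged Carleson square $4Q(z)$ (playing the role of your arc $I(\zeta,h)$) divided by $1-|z|$, and matches scales by taking $n$ with $2^{-n}\d_n\le 1-|z|\le 2^{-(n-1)}\d_{n-1}$, so that a full generation-$n$ interval of mass $2^{-n}$ sits inside $4Q(z)$ and the ratio is at least $(2\d_{n-1})^{-1}$, which in fact tends to infinity (so $|S(z)|\to 0$ on $\Omega$, stronger than the $\limsup<1$ you aim for).

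One small slip to fix in your implementation: in the final paragraph you take $n$ to be the \emph{largest} index with $2^{-n}\d_n\ge h$, but with that choice the generation-$n$ intervals have length at least $h$ and need not be contained in the arc $I(\zeta,h)$ of length $2h$. You want the other side of the bracket, namely the smallest $n$ with $2^{-n}\d_n\le h/2$; since the lengths satisfy $2^{-(n+1)}\d_{n+1}<\tfrac12\,2^{-n}\d_n$, this index exceeds yours by at most two, and the resulting bound $\s(I(\zeta,h))/h\gtrsim 1/\d_{n-1}\gtrsim 1$ survives unchanged --- exactly the adjustment your hedge ``up to a bounded amount'' anticipated, so the gap is purely notational rather than substantive.
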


\medskip

\noindent
\emph{Proof of Corollary~\ref{cor3}.} Let $\{\d_n\}$ be the sequence associated with the symmetric Cantor measure $\s$ that induces $S$, and let $\Omega$ be as in Corollary~\ref{coro-sing}.
Assume that $z\in \Omega$ and $2^{-n}\d_n\le 1-|z|\le 2^{-(n-1)}\d_{n-1}$ for some $n\in \N \setminus \{1,2\}$.
Then $4Q(z)$ contains an interval of $E_n$ from the construction of $\s$.
Hence
    \begin{equation*}
    \begin{split}
    P[\s](z)&=\int_0^{2\pi} \frac{1-|z|^2}{|z-e^{i\t}|^2}\,d\s(\t)
    \ge \int_{e^{i\t}\in 4Q(z)} \frac{1-|z|^2}{|z-e^{i\t}|^2}\,d\s(\t) \\
    &\gtrsim \frac{\s(4Q(z)\cap \partial \D)}{1-|z|}\ge (2 \d_{n-1})^{-1} \longrightarrow \infty, \quad n \rightarrow \infty.
    \end{split}
    \end{equation*}
Consequently, we obtain
    $$\lim_{z\in \Omega, |z|\rightarrow 1^-} |S(z)|=\lim_{z\in \Omega, |z|\rightarrow 1^-} \exp\left(-P[\s](z)\right)=0,$$
and the assertion follows from Corollary~\ref{coro-sing}. \hfill$\Box$

\medskip

Next we present another consequence of Corollary~\ref{coro-sing}. 

\begin{corollary}\label{density}
Let $\s$ be a positive singular measure in the unit circle and let $S$ be the corresponding singular inner function. Assume that there exists a constant $\delta >0$ such that for any point $\xi$ in the closed support of $\s$ we have
$$
\liminf_{h \to 0^+ } \frac{\s (\{\psi \in \partial \D : |\psi - \xi| < h \})}{h} > \delta. 
$$
Then $S$ is a one-component inner function.
\end{corollary}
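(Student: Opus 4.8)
The plan is to derive the statement from Corollary~\ref{coro-sing}, applied with trivial Blaschke factor $B\equiv 1$. Since a singular inner function has no zeros, the hypothesis on the location of zeros in Corollary~\ref{coro-sing} is vacuous, and with $\Th=S$ and $\mu(\Th)=\s$ it remains only to prove that
$\limsup_{z\in\Omega,\,|z|\to1^-}|S(z)|<1$, where $\Omega=\{z\in\D:1-|z|\ge 2\dist(z/|z|,\supp\s)\}$. Writing $|S(z)|=\exp(-P[\s](z))$, this is exactly the assertion that $P[\s]$ stays bounded away from $0$ on $\Omega$ near the boundary, i.e. $\inf\{P[\s](z):z\in\Omega,\ |z|\ge r_0\}>0$ for some $r_0<1$. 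So the whole problem reduces to a uniform lower bound for the Poisson integral of $\s$ along the sawtooth region $\Omega$.

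The mechanism producing such a lower bound is local. Fix $z\in\Omega$ and set $h=1-|z|$. By the definition of $\Omega$ there is $\xi\in\supp\s$ with $\dist(e^{i\arg z},\xi)\le h/2$, and every $\psi$ with $|\psi-\xi|<h/2$ then satisfies $|\psi-e^{i\arg z}|<h$, so that $|z-\psi|\lesssim h$ and the Poisson kernel obeys $\frac{1-|z|^2}{|z-\psi|^2}\gtrsim \tfrac1h$. Integrating over the arc $\{|\psi-\xi|<h/2\}$ gives $P[\s](z)\gtrsim \tfrac1h\,\s(\{|\psi-\xi|<h/2\})$. Thus the desired conclusion follows at once from a \emph{uniform} density bound: there should exist $\delta'>0$ and $h_0\in(0,1)$ with
\[
\s(\{\psi\in\partial\D:|\psi-\xi|<h\})\ge \delta' h\qquad\text{for all }\xi\in\supp\s\text{ and }0<h\le h_0,
\]
for then $P[\s](z)\gtrsim \delta'/2$ whenever $z\in\Omega$ and $1-|z|\le h_0$, whence $\limsup_{z\in\Omega,|z|\to1^-}|S(z)|\le e^{-c}<1$ and $S\in\I$ by Corollary~\ref{coro-sing}.

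Establishing this uniform bound is the step I expect to be the main obstacle, since the hypothesis only supplies a \emph{pointwise} statement: for each $\xi\in\supp\s$ there is a radius $r(\xi)>0$ with $\s(\{|\psi-\xi|<h\})>\delta h$ for all $h\le r(\xi)$, and a priori $r(\xi)$ is not bounded below. The genuine difficulty is that a lower density bound at a point does not transfer to a neighbouring point at the \emph{same} scale, because recentring an arc by a distance $d$ costs an additive error of size $\delta d$ in the estimate. I would attack this by compactness of $\supp\s$: cover it by finitely many arcs $\{|\psi-\xi_i|<r(\xi_i)/2\}$, put $h_0=\tfrac12\min_i r(\xi_i)>0$, and for a general $\psi\in\supp\s$ at scale $h\le h_0$ locate a covering centre $\xi_i$ with $d:=|\psi-\xi_i|$ controlled, comparing $\{|\psi'-\psi|<h\}$ with the arc around $\xi_i$ via $\{|\psi'-\xi_i|<h-d\}\subset\{|\psi'-\psi|<h\}$. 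This comparison only yields $\s(\{|\psi'-\psi|<h\})\ge\delta(h-d)$ for scales $h>d$, so the hardest regime is that of small scales $h\lesssim d$, where $\psi$ sits far (relative to $h$) from every good centre. Here one must use the global structure forced by the hypothesis at the accumulation points of $\supp\s$ — the requirement that the density bound hold at \emph{every} support point, limit points included — to prevent the mass near $\psi$ from being too sparse at scale $h$; pinning down this interaction between the scale $1-|z|$ and the threshold $r(\xi)$ at the nearest support point is the crux of the proof.
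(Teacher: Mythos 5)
Your first two paragraphs are correct: taking $B\equiv 1$ in Corollary~\ref{coro-sing}, the corollary reduces to a uniform lower bound for $P[\sigma]$ on the sawtooth region $\Omega=\{z\in\D: 1-|z|\ge 2\dist(z/|z|,\supp\sigma)\}$, and such a bound would indeed follow from a \emph{uniform} density estimate $\sigma(\{|\psi-\xi|<h\})\ge\delta' h$ for all $\xi\in\supp\sigma$ and all $0<h\le h_0$. But the proposal is not a proof: the uniform estimate, which you correctly isolate as the crux, is left unproven. In fact it cannot be proven, because it is false under the stated pointwise hypothesis --- and so is the corollary as literally stated. Let $\xi_n=e^{i2^{-n}}$ and $\sigma=\delta_1+\sum_{n\ge1}16^{-n}\delta_{\xi_n}$. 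Every point of $\supp\sigma=\{1\}\cup\{\xi_n\}$ carries an atom, so the liminf in the hypothesis is $+\infty$ at every support point and the hypothesis holds for every $\delta>0$. Yet the support points nearest to $\xi_n$ lie at distance about $2^{-n-1}\gg 8^{-n}$, so at scale $h_n=8^{-n}$ one has $\sigma(\{|\psi-\xi_n|<h_n\})=16^{-n}=2^{-n}h_n$: no uniform density bound holds. Worse, at $z_n=(1-8^{-n})\xi_n\in\Omega$ one computes $P[\sigma](z_n)\lesssim 2^{-n}$: the atom at $\xi_n$ contributes $2\cdot 16^{-n}/8^{-n}=2\cdot 2^{-n}$, the atom at $1$ (at distance $\asymp 2^{-n}$ from $z_n$) contributes $\lesssim 8^{-n}/4^{-n}=2^{-n}$, and all remaining atoms together contribute $O(2^{-n})$. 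Hence $|S(z_n)|\to 1$ with $z_n\in\Omega$, $|z_n|\to 1^-$, so $S\notin\I$ by the necessity direction of Corollary~\ref{coro-sing} (equivalently by Theorem~\ref{Thm1}, since $\sigma(Q(z_n))\ge 16^{-n}>0$ while $|S(z_n)|\to 1$). One can also verify this without quoting the paper: the same computation shows $P[\sigma]\lesssim 2^{-n}$ on the whole half-circle $\{z\in\D:|z-\xi_n|=8^{-n}\}$, so $|S|\ge 1-O(2^{-n})$ there; for fixed $\e\in(0,1)$ and $n$ large these curves are disjoint from $\{|S|<\e\}$ and separate its points near $\xi_n$ from its points near $1$, so no level set is connected. (The same measure contradicts Corollary~\ref{countable} as well.)

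So the obstacle you ran into in your last paragraph is not a defect of your technique; it is a genuine, unbridgeable gap, and it is present in the paper's own proof. The paper proves $P[\sigma](z)\ge C\sigma(I(z))/(1-|z|)$ and deduces, for each \emph{fixed} $\xi\in\supp\sigma$, that $\liminf_{r\to 1^-}P[\sigma](r\xi)\ge C\delta$, then invokes Corollary~\ref{coro-sing}. That invocation is exactly the non sequitur you warned against: Corollary~\ref{coro-sing} requires $\limsup_{z\in\Omega,\,|z|\to 1^-}|S(z)|<1$, a bound uniform over the sawtooth, whereas the radial bounds obtained are pointwise, with a threshold in $1-r$ depending on $\xi$ (in the example above, the local bound at $\xi_n$ only kicks in at scales $\lesssim 16^{-n}$, far below the scale $8^{-n}$ where the trouble occurs). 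Your diagnosis of where the difficulty sits is therefore precisely right. Both the statement and the two arguments are rescued if the hypothesis is read uniformly, i.e.\ if one assumes there exist $\delta, h_0>0$ with $\sigma(\{|\psi-\xi|<h\})\ge\delta h$ for all $\xi\in\supp\sigma$ and all $0<h\le h_0$: under that hypothesis your first two paragraphs constitute a complete proof, essentially identical to the paper's intended one.
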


\medskip

\noindent
\emph{Proof of Corollary~\ref{density}}.
Given a point $z \in \D \setminus \{0\}$ let $I(z)$ be the arc centered at $z/|z|$ of length $1-|z|$. Note that there exists an absolute constant $C>0$ such that for any $z \in \D \setminus \{0 \}$, we have 
\begin{equation*}
\int_0^{2\pi} \frac{1-|z|^2}{|z-e^{i\t}|^2}\,d\s(\t)
    \ge C \frac{\s (I(z))}{1-|z|}. 
\end{equation*}
Hence the assumption implies that for any point $\xi$ in the support of $\s$ we have 
\begin{equation*}
\liminf_{r \to 1^-} \int_0^{2\pi} \frac{1-|r|^2}{|r \xi-e^{i\t}|^2}\,d\s(\t)
    \ge C \delta. 
\end{equation*}
Since the previous integral is $- \log|S(r \xi)|$,  Corollary~\ref{coro-sing} implies that $S$ is one-component. \hfill$\Box$

\medskip

We finish the paper with an open question. Level sets of bounded analytic functions are related to the original proof of the Corona Theorem. It is well known that there exists a bounded analytic function having all level sets of infinite length. More concretely P. Jones constructed an analytic function $\Th$ from the unit disc into itself such that for any $0<c<1$, the level set $\{z \in \D : |\Th(z)|= c \}$ has infinite length. See \cite{Jones1980}. We say that an inner function $\Th$ has property $(A)$ if there exist an inner function $B$ and a constant $0<c<1$ such that arc length on the level set $\{z \in \D : |\Th(z) B(z)|= c \}$ is a Carleson measure. Roughly speaking, an inner function has property $(A)$ if by adding more zeros, one can produce a nice level set. 

\begin{question}\label{question}
Does property $(A)$ hold for any inner function?
\end{question}

 B. Cohn proved that for any one-component inner function $\Th$, there exists $0<c<1$ such that the arc length of the level set $\{z \in \D : |\Th (z)|= c \}$ is a Carleson measure (\cite{Cohn1982}). Hence our Theorem~\ref{Thm2} gives that any inner function whose singular set has measure zero, has property $(A)$. It is likely that using the techniques in \cite{Garnett-Nicolau1996} one could factor any inner function $\Th$ into a finite number of inner functions $\Th_i$ having property $(A)$.

\end{document}